\def\d{\mathrm{d}}
\def\I{\mathtt{I}}
\def\B{\mathtt{B}}
\def\0{\mathbf{0}}
\def\1{\mathbf{1}}
\def\c{\mathbf{c}}
\def\f{\mathbf{f}}
\def\hf{\widehat{\mathbf{f}}}
\def\g{\mathbf{g}}
\def\h{\mathbf{h}}
\def\p{\mathbf{p}}
\def\hp{\widehat{\mathbf{p}}}
\def\v{\mathbf{v}}
\def\x{\mathbf{x}}
\def\y{\mathbf{y}}
\def\z{\mathbf{z}}
\def\btheta{{\boldsymbol{\theta}}}
\def\bphi{{\boldsymbol{\phi}}}
\def\M{\mathcal{M}}
\def\F{\mathbb{F}}
\def\R{\mathbb{R}}
\def\trace{\mathrm{trace}}
\newcommand{\argmin}{\operatornamewithlimits{argmin}}
\newtheorem{theorem}{Theorem}
\newtheorem{corollary}{Corollary}
\newtheorem{lemma}{Lemma}
\title{Isovolumetric Energy Minimization for Ball-Shaped Volume-Preserving Parameterizations of 3-Manifolds}
\author[1]{Shu-Yung Liu}
\author[1]{Tsung-Ming Huang}
\author[2,3,4]{Wen-Wei Lin}
\author[1]{Mei-Heng Yueh}
\affil[1]{Department of Mathematics, National Taiwan Normal University, Taipei, Taiwan}
\affil[2]{Nanjing Center for Applied Mathematics, Nanjing, China}
\affil[3]{Shanghai Institute for Mathematics and Interdisciplinary Sciences, Shanghai, China}
\affil[4]{Department of Applied Mathematics, National Yang Ming Chiao Tung University, Hsinchu, Taiwan}
\date{}
\date{\vspace{-5ex}}
\begin{document}
\begin{sloppypar}
	
\maketitle
	
\abstract{A volume-preserving parameterization is a bijective mapping that maps a 3-manifold onto a specified canonical domain that preserves the local volume. This paper formulates the computation of ball-shaped volume-preserving parameterizations as an isovolumetric energy minimization (IEM) problem with the boundary points constrained on a unit sphere. In addition, we develop a new preconditioned nonlinear conjugate gradient algorithm for solving the IEM problem with guaranteed theoretical convergence and significantly improved accuracy and computational efficiency compared to other state-of-the-art algorithms. Applications to solid shape registration and deformation are presented to highlight the usefulness of the proposed algorithm.}

\bigskip
\textbf{Keywords.} Simplicial surface, Simplicial mapping, Volume-preserving parameterization, Numerical optimization

\medskip
\textbf{AMS subject classifications.} 65D18, 68U05, 68U01, 65D17

\section{Introduction} 

The ball-shaped volume-preserving parameterization of a simply connected 3-manifold is a bijective mapping that maps the manifold to a solid unit ball while preserving local volumes. This mapping induces a global coordinate chart for the manifold, simplifying various processing tasks to those applicable to the solid unit ball. Such parameterizations are particularly useful in computational tasks involving complex geometric structures. The technique has been utilized to compute optimal mass transport (OMT) mappings \cite{YuHL21}, which are subsequently applied to process 3D medical images for brain tumor segmentation \cite{LiJY21,LiLH22}. Specifically, unit cube-shaped volume-preserving parameterizations of the brain images are employed in machine-learning applications. The preservation of local volume is crucial for accurately training models to infer the tumor region and measure tumor size.

Recent advanced algorithms for computing 3-manifold mappings include close-to-conformal deformations \cite{ChPS15}, the simplicial foliation method \cite{CaSZ16}, polycube mappings \cite{FuLi16}, OMT-based methods \cite{GuLS16,SuCL17}, scalable locally injective mappings \cite{RaPP17}, the simplex lifting method \cite{DuAZ20}, foldover-free mappings \cite{GaKK21}, symmetric volume mappings \cite{AbSG23}, and the volumetric stretch energy minimization (VSEM) \cite{YuLL19,YuLL20,HuLL23}. Among these algorithms, the OMT-based methods and the VSEM aim to compute ball-shaped volume-preserving parameterizations, similar to this paper. Specifically, the VSEM, which approaches the target mapping through numerical optimization, is the most relevant to our work. In the following, we introduce the pros and cons of the VSEM algorithm.

The volumetric stretch energy was first defined in \cite{YuLL19} by heuristically imposing the volume-preserving condition to the volumetric discrete Dirichlet energy. This functional provides a global measurement of volume distortion for mappings, assuming the total volume of the domain and the image are the same. The volumetric stretch energy is equivalent to the sum of the scaled local volumes of tetrahedra with scaling factors being the Jacobian determinant evaluated at those tetrahedra \cite{HuLL23}. The theoretical minimal value of the energy is the total image volume, which occurs only at mappings with all local Jacobian determinants being one, i.e., volume-preserving mappings \cite{HuLL23}. This concept is a generalization of the stretch energy \cite{YuLW19,Yueh23} for surface mappings and can also be extended to higher dimensional manifold mappings.

The previous VSEM algorithm \cite[Algorithm 4.4]{YuLL19} employed a fixed-point method that iteratively approaches critical points of the functional provided a Dirichlet boundary condition. This method can also be applied to 3-manifolds with boundaries being genus-one surfaces \cite{YuLL20}. The boundary mapping in the VSEM algorithm is selected to be a spherical area-preserving mapping achieved by minimizing the stretch energy \cite{YuLW19}. The stretch energy for surface mappings is the 2-dimensional analog of the volumetric stretch energy with the theoretical minimal value being the image area. The lower bound of the energy is attained only at area-preserving mappings \cite{Yueh23}.

The VSEM method is recognized as one of the most effective algorithms for maintaining the volume-preserving property. Still, the VSEM method has room to be improved. The convergence of the VSEM algorithm is not theoretically guaranteed. Also, the choice of fixed spherical area-preserving boundary mappings may not be optimal for ball-shaped volume-preserving mappings. In addition, the original VSEM method \cite{HuLL23} assumes the total image volume is constant throughout the iteration process, which may not hold if the boundary points are allowed to glide along the unit sphere. To remedy these drawbacks, in this paper, we express the boundary mapping in spherical coordinates and optimize the whole mapping to find the best ball-shaped parameterization that reaches the least possible value of the objective functional. To compute a minimizer of the objective functional, we apply the nonlinear conjugate gradient (CG) method with suitable preconditioners. An appropriate generalization of the existing theoretical framework of numerical optimization can then guarantee the global convergence of this approach.

\subsection{Contribution}
This paper improves the problem formulation of the VSEM for computing ball-shaped volume-preserving parameterizations of simply connected 3-manifolds with robust theoretical support and proposes an efficient algorithm with guaranteed convergence. 
The main contributions of this paper are listed as follows:
\begin{itemize}
\item[(i)] We formulate the ball-shaped volume-preserving parameterizations as an isovolumetric energy minimization (IEM) problem with a spherical boundary shape constraint. The main difference between the IEM and the previous VSEM \cite{YuLL19} is that the boundary mapping is not prescribed. In other words, the spherical boundary mapping is optimized simultaneously with the interior mapping to reach a parameterization with optimal volume distortions.
\item[(ii)] We deal with the spherical boundary shape constraint by using the spherical coordinates, derive the associated explicit gradient formula, and develop a preconditioned nonlinear CG method for solving the IEM problem.  
\item[(iii)] A rigorous proof of the global convergence of the iterations in the proposed algorithm is provided. This proof can be regarded as a generalization of the convergence results for the nonlinear CG method introduced in \cite[Chapter 5]{NoWr06}.
\item[(iv)] Numerical experiments indicate that the effectiveness of decreasing the volume distortions is significantly improved compared to the previous VSEM algorithm \cite{YuLL19}, one of the most effective methods for computing ball-shaped volume-preserving parameterizations.
\item[(v)] Applications of ball-shaped volume-preserving parameterizations to shape registration, deformation, and dissimilarity measurement of human brains are demonstrated to show the practical utility of the proposed algorithm. 
\end{itemize}

\subsection{Notation}
In this paper, we use the following notation.
\begin{itemize}
\item $\mathbb{R}$ denotes the set of real numbers.
\item $e$ denotes the identity mapping.
\item Bold letters, e.g., $\mathbf{f}$, $\mathbf{g}$, $\mathbf{p}$, denote real-valued vectors or matrices.
\item Capital letters, e.g., $L$, $M$, denote real-valued matrices.
\item $I_n$ denotes the identity matrix of size $n\times n$.
\item Typewriter letters, e.g., $\mathtt{I}$, $\mathtt{B}$, denote ordered sets of indices.
\item $n_\I$ denotes the number of elements of the set $\I$.
\item $\mathbf{f}_i$ denotes the $i$th entry or row of the vector or matrix $\mathbf{f}$.
\item $\mathbf{f}_\mathtt{I}$ denotes the subvector or submatrix of $\mathbf{f}$ composed of $\mathbf{f}_i$ for $i\in\mathtt{I}$.
\item $L_{i,j}$ denotes the $(i,j)$th entry of the matrix $L$.
\item $L_{\mathtt{I},\mathtt{J}}$ denotes the submatrix of $L$, composed of entries $L_{i,j}$ for $i\in\mathtt{I}$ and $j\in\mathtt{J}$.
\item The vector of length $n$ with all entries being 0 and 1 is denoted by $\0_n$ and $\1_n$, respectively.
\end{itemize}

\subsection{Organization of the paper}

The remaining part of this paper is organized as follows. In Section~\ref{sec:2}, we introduce the simplicial 3-manifolds, mappings, and the isovolumetric energy functional. Then, we formulate the IEM problem for ball-shaped mappings and present the explicit formula of the energy gradient in Section~\ref{sec:3}. The preconditioned nonlinear CG method of the IEM is proposed in Section~\ref{sec:4}. The associated convergence analysis is provided in Section~\ref{sec:Convergence}. Numerical experiments and comparisons to the state-of-the-art are presented in Section~\ref{sec:6}. Applications to volume-preserving registration and deformation are demonstrated in Section~\ref{sec:7}. Ultimately, concluding remarks are given in Section \ref{sec:8}.

\section{Simplicial 3-manifolds, mappings, and functionals}
\label{sec:2}

We introduce simplicial 3-manifolds, mappings, and the isovolumetric energy functional in subsections \ref{sec:2.1}, \ref{sec:2.2} and \ref{sec:2.3}, respectively. 

\subsection{Simplicial 3-manifolds}
\label{sec:2.1}

A simplicial 3-manifold $\mathcal{M}$ is the underlying set of a homogeneous simplicial 3-complex, i.e., tetrahedral mesh, consisting of $n$ vertices
\begin{equation*}
\mathbb{V}(\mathcal{M}) = \lbrace v_s = (v_s^1, v_s^2, v_s^3)^{\top} \in \mathbb R^3 \rbrace_{s=1}^n,
\end{equation*}
$m$ tetrahedra
\begin{equation*}
\mathbb{T}(\mathcal{M}) = \lbrace \tau_t = [v_{i_t}, v_{i_t}, v_{k_t}, v_{\ell_t}] \subset \mathbb R^3 \,|\, v_{i_t}, v_{i_t}, v_{k_t}, v_{\ell_t} \in \mathbb{V}(\mathcal{M}) \rbrace_{t=1}^m,
\end{equation*}
triangular faces
\begin{equation*}
\mathbb{F}(\mathcal{M}) = \lbrace [v_i, v_j, v_k] \,| \, [v_i, v_j, v_k, v_\ell] \in \mathbb{T}(\mathcal{M}) \text{ for some } v_\ell \in \mathbb{V}(\mathcal{M}) \rbrace,
\end{equation*}
and edges
\begin{equation*}
\mathbb{E}(\mathcal{M}) = \lbrace [v_i, v_j] \,|\, [v_i, v_j, v_k] \in \mathbb{F}(\mathcal{M}) \text{ for some } v_k \in \mathbb{V}(\mathcal{M}) \rbrace,
\end{equation*}
where $\left[{v}_0, \ldots, {v}_k\right]$ denotes the $k$-simplex with vertices ${v}_0, \ldots, {v}_k$. Simplicial 3-manifolds are known as tetrahedral meshes, commonly used to represent solid shapes in computer graphics and computational mechanics.

\subsection{Simplicial mappings}
\label{sec:2.2}

A simplicial mapping $f:\mathcal{M} \rightarrow \mathbb R^3$ is a particular piecewise affine mapping that satisfies its image $f(\mathcal{M})$ is also the underlying set of a simplicial 3-complex. 
The restriction mapping $f|_\tau$ on a tetrahedron $\tau = [v_i, v_j, v_k, v_\ell] \in \mathbb{T}(\mathcal{M})$ is formulated as
\[
f|_{\tau}(v) = \frac{1}{|\tau|} \Big(\lambda_i(\tau, v) \, f(v_{i}) + \lambda_j(\tau, v) \, f(v_{j}) + \lambda_k(\tau, v) \, f(v_{k}) + \lambda_\ell(\tau, v) \, f(v_{\ell}) \Big),
\]
where the signed volume $|\tau| = |[v_i, v_j, v_k, v_\ell]|$ is defined as
\begin{equation*}
|[v_i, v_j, v_k, v_\ell]| = \frac{1}{6} \left( ((v_j-v_i)\times(v_k-v_i))^\top(v_\ell-v_i) \right),
\end{equation*}
the coefficients 
$\lambda_i(\tau,v)=\frac{|[v,v_{j},v_{k},v_{\ell}]|}{|\tau|}$, 
$\lambda_j(\tau,v)=\frac{|[v_{i},v,v_{k},v_{\ell}]|}{|\tau|}$, 
$\lambda_k(\tau,v)=\frac{|[v_{i},v_{j},v,v_{\ell}]|}{|\tau|}$ and 
$\lambda_\ell(\tau,v)=\frac{|[v_{i},v_{j},v_{k},v]|}{|\tau|}$ are called the barycentric coordinates on the tetrahedron $\tau$. 
In practice, a simplicial mapping $f$ is represented as a matrix
\begin{equation} \label{eq:f}
\f = 
\begin{bmatrix}
f(v_1)^{\top} \\
\vdots \\
f(v_n)^{\top}
\end{bmatrix} 
= \begin{bmatrix}
f_1^\top \\
\vdots \\
f_n^\top
\end{bmatrix}
=
\begin{bmatrix}
f_1^1  &  f_1^2  &  f_1^3\\
\vdots &  \vdots &  \vdots \\
f_n^1  &  f_n^2  &  f_n^3
\end{bmatrix}=
\begin{bmatrix}
    \f^1  &  \f^2  &  \f^3 
\end{bmatrix} \in\mathbb{R}^{n\times 3}.
\end{equation}
A simplicial mapping $f$ is said to be volume-preserving if the volume of each transformed tetrahedron is equal to the original one up to a global constant scaling factor $c\in\R$, i.e., $|f(\tau)| = c|\tau|$, for some $c\in\R$, for all $\tau\in\mathbb{T}(\mathcal{M})$. The mapping $f$ is orientation-preserving if $c>0$, and orientation-reversing if $c<0$. The mapping $f$ is called a trivial mapping if $c=0$. A desirable volume-preserving mapping is one with a nonzero scaling factor, i.e., a nontrivial or a bijective one.

\subsection{Isovolumetric energy functional}
\label{sec:2.3}

In this paper, the adopted objective functional for computing volume-preserving parameterizations is the isovolumetric energy defined as
\begin{subequations} \label{eq:Ea}
\begin{equation} \label{eq:Ea1}
E_\mathrm{I}(f) = \frac{\mathcal{V}({e})}{\mathcal{V}(f)} E_\mathrm{V}(f) -  \mathcal{V}(f), 
\end{equation}
where $\mathcal{V}$ denotes the image volume measurement, ${e}$ denotes the identity mapping on $\M$, and $E_\mathrm{V}$ denotes the volumetric stretch energy functional \cite{HuLL23} defined as
\begin{equation} \label{eq:Es0}
E_\mathrm{V}(f) = \sum_{\tau\in\mathbb{T}(\M)} \frac{|f(\tau)|^2}{|\tau|}.
\end{equation}
\end{subequations}
The definition of $E_\mathrm{V}$ in \eqref{eq:Es0} is equivalent to \cite[(3.3)]{HuLL23} with a scaling factor of $2/3$. 

It is proved in \cite[Theorem 3.1]{HuLL23} that $E_\mathrm{V}(f)$ in \eqref{eq:Es0} can be represented as
\begin{subequations} \label{eq:Es}
\begin{equation} 
E_\mathrm{V}(f) = \frac{1}{2} \trace (\f^{\top} L_\mathrm{V}(f) \, \f) = \frac{1}{2} \sum_{s=1}^3 {\f^s}^{\top} L_\mathrm{V}(f) \, \f^s,
\end{equation}
where $\f$ and $\f^s$ are the matrix and vectors defined in \eqref{eq:f}, $L_\mathrm{V}(f)$ is the weighted Laplacian matrix defined as
\begin{equation} \label{eq:VSLaplacian}
[L_\mathrm{V}(f)]_{i,j}=
\begin{cases}
\displaystyle -\sum_{[v_{i},v_{j},v_{k}, v_{\ell}]\in \mathbb{T}(\mathcal{M})} [\omega_\mathrm{V}(f)]_{i,j}^{k,\ell}  & \mbox{if $[v_{i}, v_{j}]\in\mathbb{E}(\mathcal{M})$}, \\
\displaystyle -\sum_{[v_{i}, v_{k}]\in\mathbb{E}(\mathcal{M})} [L_\mathrm{V}(f)]_{i,k}   &\mbox{if $j=i$},\\
0 & \mbox{otherwise.}
\end{cases}
\end{equation}
The mapping-dependent weight function $\omega_\mathrm{V}(f)$ is the modified cotangent weight defined as
\begin{equation}
[\omega_\mathrm{V}(f)]_{i,j}^{k,\ell}=\dfrac{\cot{(\theta_{i,j}^{k,\ell}(f))}|f([v_{i},v_{j},v_{k},v_{\ell}])|}{9\,|[v_{i},v_{j},v_{k},v_{\ell}]|},
\end{equation}
where $\theta_{i,j}^{k,\ell}(f)$ is the dihedral angle between triangular faces $f([v_{i},v_{k},v_{\ell}])$ and $f([v_{j},v_{\ell},v_{k}])$ in the tetrahedron $f(\tau)=[f_{i}, f_{j}, f_{k}, f_{\ell}]$ on the edge $f([v_{k},v_{\ell}])$, as illustrated in Figure \ref{dihedral angle}.
\end{subequations}

\begin{figure}[ht!]
\centering
\includegraphics{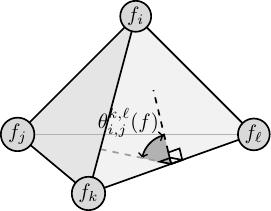}
\caption{An illustration of the dihedral angle between faces $ f([v_{i},v_{k},v_{\ell}])$ and $f([v_{j},v_{\ell},v_{k}])$ in the tetrahedron $f(\tau)=[f_{i}, f_{j}, f_{k}, f_{\ell}]$.}
\label{dihedral angle}
\end{figure}

The volumetric stretch energy $E_\mathrm{V}$ in \eqref{eq:Es} was defined by Yueh et al. \cite{YuLL19} for computing volume-preserving parameterizations of simplicial 3-manifolds. Under the constraint that the total volume of the manifold remains unchanged, Huang et al. \cite{HuLL23} further proved that the lower bound of $E_\mathrm{V}(f)$ is $\mathcal{V}(f)$. The equality holds if and only if the mapping $f$ is volume-preserving \cite[Theorem 3.3]{HuLL23}. Based on this fact, it is natural to define the isovolumetric energy as the difference between $E_\mathrm{V}(f)$ and $\mathcal{V}(f)$. To release the constraint $\mathcal{V}(f) = \mathcal{V}({e})$, we scale $E_\mathrm{V}(f)$ by the factor $\mathcal{V}({e}) / \mathcal{V}(f)$ in our proposed objective functional \eqref{eq:Ea}. In the following theorem, we derive an analogous consequence of \cite[Theorem 3.3]{HuLL23} without the constant total image volume assumption. 

\begin{theorem} \label{cor:2.2}
Let $\M$ be a simplicial $3$-manifold and $f$ be a simplicial mapping defined on $\M$. The isovolumetric energy in \eqref{eq:Ea} satisfies
$$
E_\mathrm{I}(f) \geq 0,
$$
and the equality holds if and only if $f$ is volume-preserving.
\end{theorem}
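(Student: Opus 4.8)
The plan is to reduce the statement to the known result \cite[Theorem 3.3]{HuLL23} by exploiting the scaling invariance built into the definition \eqref{eq:Ea1}. The key observation is that $E_\mathrm{I}$ is invariant under the substitution $f \mapsto \alpha f$ for any nonzero scalar $\alpha$: since $\mathcal{V}$ scales like a volume ($\mathcal{V}(\alpha f) = |\alpha|^3 \mathcal{V}(f)$) and $E_\mathrm{V}$ scales like the square of a volume ($E_\mathrm{V}(\alpha f) = \alpha^6 E_\mathrm{V}(f)$ from \eqref{eq:Es0}), the prefactor $\mathcal{V}(e)/\mathcal{V}(f)$ together with $E_\mathrm{V}(f)$ produces $\alpha^3$ in both terms, which cancels against the $\alpha^3$ from $\mathcal{V}$. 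Hence $E_\mathrm{I}(\alpha f) = E_\mathrm{I}(f)$. This lets me normalize any given $f$ so that $\mathcal{V}(\widetilde f) = \mathcal{V}(e)$ by choosing $\alpha = (\mathcal{V}(e)/\mathcal{V}(f))^{1/3}$, provided $\mathcal{V}(f) > 0$; the degenerate case $\mathcal{V}(f) = 0$ (trivial mapping) must be handled separately, but such a mapping is not volume-preserving in the desired nontrivial sense and one checks $E_\mathrm{I}$ is either undefined or can be excluded by hypothesis, so I would state the theorem for nontrivial $f$ or note that $\mathcal{V}(f)=0$ makes the functional ill-posed.

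Once normalized, for $\widetilde f$ with $\mathcal{V}(\widetilde f) = \mathcal{V}(e)$ the functional collapses to $E_\mathrm{I}(\widetilde f) = E_\mathrm{V}(\widetilde f) - \mathcal{V}(\widetilde f)$, and the constant-total-image-volume hypothesis of \cite[Theorem 3.3]{HuLL23} is exactly met. That theorem gives $E_\mathrm{V}(\widetilde f) \geq \mathcal{V}(\widetilde f)$, hence $E_\mathrm{I}(\widetilde f) \geq 0$, and by scaling invariance $E_\mathrm{I}(f) = E_\mathrm{I}(\widetilde f) \geq 0$, proving the inequality.

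For the equality case, I would again pass through $\widetilde f$: $E_\mathrm{I}(f) = 0 \iff E_\mathrm{I}(\widetilde f) = 0 \iff E_\mathrm{V}(\widetilde f) = \mathcal{V}(\widetilde f)$, which by \cite[Theorem 3.3]{HuLL23} holds iff $\widetilde f$ is volume-preserving. It then remains to observe that volume-preservation is itself scale-invariant: if $|\widetilde f(\tau)| = c|\tau|$ for all $\tau$, then $|f(\tau)| = |\alpha|^{-3}\,|\widetilde f(\tau)| = |\alpha|^{-3} c\,|\tau|$, so $f$ is volume-preserving with scaling constant $|\alpha|^{-3}c$, and conversely. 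This closes the biconditional.

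The main obstacle is not the algebra — the scaling exponents are routine — but making sure the reduction is airtight at the boundary of the hypotheses: specifically, confirming that $\mathcal{V}(f) > 0$ is either guaranteed by the standing assumptions on simplicial mappings (orientation-preserving, nondegenerate images) or else explicitly assumed, since the prefactor $\mathcal{V}(e)/\mathcal{V}(f)$ is otherwise meaningless, and checking that the sign conventions in $\mathcal{V}$ (signed versus unsigned volume) are consistent with how \cite[Theorem 3.3]{HuLL23} is stated. I would also double-check that the scaling law $E_\mathrm{V}(\alpha f) = \alpha^6 E_\mathrm{V}(f)$ genuinely follows from \eqref{eq:Es0} rather than from the Laplacian representation \eqref{eq:Es} (where the weights $\omega_\mathrm{V}(f)$ themselves depend on $f$), since the cleanest argument uses the geometric formula $\sum_\tau |f(\tau)|^2/|\tau|$ directly.
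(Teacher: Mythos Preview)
Your reduction strategy works, but your scaling claim is slightly off: $E_\mathrm{I}$ is \emph{not} invariant under $f\mapsto\alpha f$. From $\mathcal{V}(\alpha f)=\alpha^3\mathcal{V}(f)$ and $E_\mathrm{V}(\alpha f)=\alpha^6 E_\mathrm{V}(f)$ one gets
\[
E_\mathrm{I}(\alpha f)=\frac{\mathcal{V}(e)}{\alpha^3\mathcal{V}(f)}\,\alpha^6 E_\mathrm{V}(f)-\alpha^3\mathcal{V}(f)=\alpha^3 E_\mathrm{I}(f),
\]
so the two $\alpha^3$ factors do not cancel but factor out. This is harmless for your purposes: with $\alpha=(\mathcal{V}(e)/\mathcal{V}(f))^{1/3}>0$ you still have $E_\mathrm{I}(f)\geq 0\iff E_\mathrm{I}(\widetilde f)\geq 0$ and $E_\mathrm{I}(f)=0\iff E_\mathrm{I}(\widetilde f)=0$, and the rest of your argument via \cite[Theorem~3.3]{HuLL23} goes through unchanged.

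The paper takes a different, more self-contained route: it applies Cauchy--Schwarz directly to \eqref{eq:Es0},
\[
E_\mathrm{V}(f)\,\mathcal{V}(e)=\Big(\sum_\tau \tfrac{|f(\tau)|^2}{|\tau|}\Big)\Big(\sum_\tau |\tau|\Big)\geq\Big(\sum_\tau |f(\tau)|\Big)^2=\mathcal{V}(f)^2,
\]
then divides by $\mathcal{V}(f)$ to obtain $E_\mathrm{I}(f)\geq 0$, with equality iff $|f(\tau)|/|\tau|$ is constant. This avoids invoking the external result and the normalization step altogether, and as a bonus sidesteps your worry about matching the sign and constant-volume hypotheses of \cite[Theorem~3.3]{HuLL23}. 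Your approach, on the other hand, makes transparent \emph{why} the functional was designed with the prefactor $\mathcal{V}(e)/\mathcal{V}(f)$: precisely to make the sign of $E_\mathrm{I}$ scale-invariant so that the fixed-volume theory applies after rescaling.
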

\begin{proof}
By applying the Cauchy--Schwarz inequality to \eqref{eq:Es0}, we have
\begin{align*}
E_\mathrm{V}(f) \, \mathcal{V}({e}) &= \sum_{\tau\in\mathbb{T}(\M)} \frac{|f(\tau)|^2}{|\tau|} 
\sum_{\tau\in\mathbb{T}(\M)} |\tau|
\geq \left(\sum_{\tau\in\mathbb{T}(\M)} |f(\tau)| \right)^2 = \mathcal{V}(f)^2. 
\end{align*}
Divide both sides of the inequality by $\mathcal{V}(f)$, we obtain
$$
\frac{\mathcal{V}({e})}{\mathcal{V}(f)}E_\mathrm{V}(f) \geq \mathcal{V}(f),
$$
i.e., the isovolumetric energy \eqref{eq:Ea} satisfies $E_\mathrm{I}(f) \geq 0$. 
The equality holds if and only if $\frac{|f(\tau)|}{|\tau|}$ is constant, i.e., $f$ is volume-preserving.
\end{proof}

Theorem~\ref{cor:2.2} provides a robust foundation for computing volume-preserving parameterizations by minimizing the isovolumetric energy \eqref{eq:Ea}. Especially when the boundary vertices are allowed to glide along the unit sphere, the exact image volume is not constant. 

\section{IEM for ball-shaped parameterizations}
\label{sec:3}

Recall that a ball-shaped parameterization of a simplicial 3-manifold is a bijective mapping with boundary vertices being mapped to the unit sphere. To remove the spherical constraint for boundary vertices in the objective functional \eqref{eq:Ea}, we represent the boundary mapping in spherical coordinates. 
We denote the index sets of boundary and interior vertices as
$$
\B = \{ b \mid v_b \in \partial\M \}
~\text{ and }~
\I = \{ i \mid v_i \in \M\backslash\partial\M \},
$$
respectively. 
Then, the boundary mapping $\f_\B = (\f_\B^1, \f_\B^2, \f_\B^3)$ can be represented in spherical coordinates as
\begin{equation}
\f^1_\B = \sin \btheta \odot \cos \bphi, ~~ \f^2_\B = \sin \btheta \odot \sin \bphi, ~~\text{and}~~ \f^3_\B = \cos \btheta,
\label{eq:sphere_coor}
\end{equation}
where $\btheta = (\theta_1, \ldots, \theta_{n_\B})^{\top} \in\R^{n_\B}$ and $\bphi = (\phi_1, \ldots, \phi_{n_\B})^{\top} \in\R^{n_\B}$.
Conversely, spherical coordinates $(\btheta, \bphi)$ can be represented by $\f_\B = (\f_\B^1, \f_\B^2, \f_\B^3)$ as
\begin{equation}
\btheta = \arccos(\f_\B^3), ~~ \bphi = \mathrm{atan2}(\f_\B^2, \f_\B^1),
\label{eq:sphere_coor_inv}
\end{equation}
where $\mathrm{atan2}$ denotes the four-quadrant inverse tangent. 
First, we reformulate the image volume $\mathcal{V}(f)$ in \eqref{eq:Ea} in spherical coordinates $\btheta$ and $\bphi$. 
Let $\alpha = [v_i, v_j, v_k]\in\mathbb{F}(\partial\M)$ be a boundary triangular face. We denote 
$$
f(\tau_{\alpha}) = [\0_3, f(\alpha)] = [\0_3, \f_i, \f_j, \f_k]
$$
as the tetrahedron formed by the image of boundary face $f(\alpha)$ together with the origin $\0_3$ of $\R^3$. 
The image volume of $f$ can be represented as
$$
\mathcal{V}(f) = \sum_{\alpha\in\F(\partial\M)} |f(\tau_{\alpha})|
= \frac{1}{6} \sum_{[v_i, v_j, v_k]\in\F(\partial\M)} \f_i^\top (\f_j \times \f_k),
$$
as illustrated in Figure \ref{fig:1}. 

\begin{figure}[]
\centering
\includegraphics{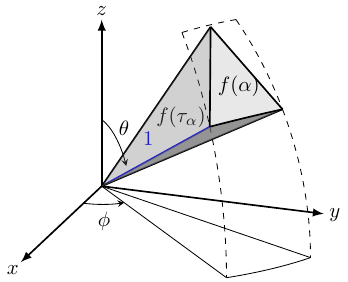}
\caption{An illustration for the tetrahedron $f(\tau_\alpha)$ formed by the boundary face $f(\alpha)$ and the origin $\0_3$ of $\R^3$.}
\label{fig:1}
\end{figure}

We denote the vertex coordinates of the boundary face $\alpha=[v_i,v_j,v_k]$ on the image $f(\alpha)$ as $\f_\alpha^s = (f_i^s, f_j^s, f_k^s)^\top$, for $s=1,2,3$. 
Now we represent $\f_\alpha^1$, $\f_\alpha^2$, $\f_\alpha^3$ in spherical coordinates $\btheta_\alpha = (\theta_i, \theta_j, \theta_k)^\top$ and $\bphi_\alpha = (\phi_i, \phi_j, \phi_k)^\top$ as
\begin{align*}
\f_\alpha^1 &= 
\begin{bmatrix}
\sin \theta_i \cos \phi_i \\
\sin \theta_j \cos \phi_j \\
\sin \theta_k \cos \phi_k \\
\end{bmatrix} 
= \begin{bmatrix}
\sin \theta_i \\
\sin \theta_j \\
\sin \theta_k \\
\end{bmatrix}
\odot
\begin{bmatrix}
\cos \phi_i \\
\cos \phi_j \\
\cos \phi_k \\
\end{bmatrix}
\eqqcolon \sin\btheta_\alpha \odot \cos\bphi_\alpha,\\
\f_\alpha^2 &= 
\begin{bmatrix}
\sin \theta_i \sin \phi_i \\
\sin \theta_j \sin \phi_j \\
\sin \theta_k \sin \phi_k \\
\end{bmatrix} 
= \begin{bmatrix}
\sin \theta_i \\
\sin \theta_j \\
\sin \theta_k \\
\end{bmatrix}
\odot
\begin{bmatrix}
\sin \phi_i \\
\sin \phi_j \\
\sin \phi_k \\
\end{bmatrix}
\eqqcolon \sin\btheta_\alpha \odot \sin\bphi_\alpha,\\
\f_\alpha^3 &= 
\begin{bmatrix}
\cos \theta_i \\ 
\cos \theta_j \\ 
\cos \theta_k 
\end{bmatrix} = \cos \btheta_\alpha,
\end{align*}
where $\odot$ denotes the Hadamard product of matrices or vectors. 
Ultimately, the problem of IEM for ball-shaped parameterizations can be formulated as
\begin{equation} \label{eq:IEM}
f^* = \argmin_{\begin{array}{c}\f_\I^1, \f_\I^2, \f_\I^3\in\R^{n_\I}\\ \btheta, \bphi\in\R^{n_\B}\end{array}} E_\mathrm{I}(\f_\I^1, \f_\I^2, \f_\I^3, \btheta, \bphi).
\end{equation}

The gradient of the image volume $|f(\tau_{\alpha})|$ with respect to $\f_\alpha^1$, $\f_\alpha^2$, $\f_\alpha^3$ can be formulated as
\begin{equation*}
\nabla_{\f_\alpha^1} |f(\tau_{\alpha})| = \frac{1}{6}(\f_\alpha^2 \times \f_\alpha^3), ~~
\nabla_{\f_\alpha^2} |f(\tau_{\alpha})| = \frac{1}{6}(\f_\alpha^3 \times \f_\alpha^1), ~~
\nabla_{\f_\alpha^3} |f(\tau_{\alpha})| = \frac{1}{6}(\f_\alpha^1 \times \f_\alpha^2).
\end{equation*}
By applying the chain rule, the gradient of $|f(\tau_{\alpha})|$ with respect to $\btheta_\alpha$ and $\bphi_\alpha$ can be formulated as
\begin{subequations} \label{eq:3.2}
\begin{align}
\nabla_{\btheta_\alpha} |f(\tau_{\alpha})| &= \frac{1}{6} \big( \cos\btheta_\alpha \odot \cos\bphi_\alpha \odot (\f^2_\alpha \times \f^3_\alpha) + \cos\btheta_\alpha \odot \sin\bphi_\alpha \odot (\f^3_\alpha \times \f^1_\alpha) \\
& \quad ~~ - \sin\btheta_\alpha \odot (\f^1_\alpha \times \f^2_\alpha) \big), \nonumber\\
\nabla_{\bphi_\alpha} |f(\tau_{\alpha})| &= \frac{1}{6} \big( \sin\btheta_\alpha \odot \cos\bphi_\alpha \odot (\f^3_\alpha \times \f^1_\alpha) - \sin\btheta_\alpha \odot \sin\bphi_\alpha \odot (\f^2_\alpha \times \f^3_\alpha) \big).
\end{align}
\end{subequations}
We denote the gradient of $\mathcal{V}(f)$ with respect to $\btheta$ and $\bphi$ as $\nabla_{\btheta} \mathcal{V}(f)$ and $\nabla_{\bphi} \mathcal{V}(f)$, respectively, which can be obtained by assembling associated tetrahedra using the formulae in \eqref{eq:3.2}.

On the other hand, from \cite[Theorem 3.5]{HuLL23}, the gradient of $E_{V}(f)$ with respect to $\f^s$ is formulated as 
\begin{equation} \label{eq:GradEs}
\nabla_{\f^s} E_{V}(f) = 3 \, L_\mathrm{V}(f) \,\f^s, ~ \text{for $s=1,2,3$}.
\end{equation}
Under a reordering with respect to $\I$ and $\B$, the gradient formula \eqref{eq:GradEs} is equivalent to
\begin{subequations}
\begin{align}
\nabla_{\f^s_\I} E_\mathrm{V}(f) &= 3 \, \big( [L_\mathrm{V}(f)]_{\I,\I} \f^s_\I + [L_\mathrm{V}(f)]_{\I,\B} \f^s_\B \big), \\
\nabla_{\f^s_\B} E_\mathrm{V}(f) &= 3 \, \big( [L_\mathrm{V}(f)]_{\B,\I} \f_\I^s + [L_\mathrm{V}(f)]_{\B,\B} \f_\B^s \big), ~ \text{for $s=1,2,3$}. 
\end{align}
\end{subequations}
By applying the chain rule, the gradient of $E_\mathrm{I}(f)$ with respect to $\f_\I^s$, $\btheta$, and $\bphi$ can be formulated as
\begin{subequations} \label{eq:Grad}
\begin{align}
\nabla_{\f_{\I}^s} E_\mathrm{I} (f) &= \Big( \frac{\mathcal{V}({e})}{2 \mathcal{V}(f)} \Big) \nabla_{\f_{\I}^s} \big( \trace (\f^{\top} L_{V}(f) \,\f) \big) \nonumber\\
&= \Big( \frac{3 \mathcal{V}({e})}{\mathcal{V}(f)} \Big) ([L_{V} (f)]_{\I,\I} \f_{\I}^s + [L_{V}(f)]_{\I,\B} \f_{\B}^s), \, \textrm{for $s=1,2,3$,}
\end{align}
\begin{align}
\nabla_{\btheta} E_\mathrm{I} (f) &= \nabla_{\btheta} \Big( \frac{\mathcal{V}({e})}{2\mathcal{V}(f)} \Big) \big( \trace (\f^{\top} L_{V}(f) \,\f) \big) + \Big( \frac{\mathcal{V}({e})}{2 \mathcal{V}(f)} \Big) \nabla_{\btheta} \big( \trace (\f^{\top} L_\mathrm{V}(f) \,\f) \big) - \nabla_{\btheta} \big(\mathcal{V}(f) \big) \nonumber\\
&= \Big( \frac{3 \mathcal{V}({e})}{\mathcal{V}(f)} \Big) \Big( \cos\btheta \odot \cos\bphi \odot ([L_\mathrm{V}(f)]_{\B,\I} \f^1_{\I} + [L_\mathrm{V}(f)]_{\B,\B} \f^1_{\B}) \nonumber\\
&\quad \quad \quad \quad ~~~~ + \cos\btheta \odot \sin\bphi \odot ([L_\mathrm{V}(f)]_{\B,\I} \f^2_{\I} + [L_\mathrm{V}(f)]_{\B,\B} \f^2_{\B}) \nonumber\\ 
&\quad \quad \quad \quad ~~~~ - \sin\btheta \odot ([L_\mathrm{V}(f)]_{\B,\I} \f^3_{\I} + [L_\mathrm{V}(f)]_{\B,\B} \f^3_{\B}) \Big) \nonumber\\
& \quad~  - \Big( 1 + \frac{\mathcal{V}({e}) \, \trace (\f^{\top} L_\mathrm{V}(f) \,\f) }{2 (\mathcal{V}(f))^2}  \Big) \nabla_{\btheta}(\mathcal{V}(f)),
\end{align}
and
\begin{align}
\nabla_{\bphi} E_\mathrm{I} (f) &= \nabla_{\bphi} \Big( \frac{\mathcal{V}({e})}{2 \mathcal{V}(f)} \Big) \big( \trace (\f^{\top} L_\mathrm{V}(f) \,\f) \big) + \Big( \frac{\mathcal{V}({e})}{2 \mathcal{V}(f)} \Big) \nabla_{\bphi} \big( \trace (\f^{\top} L_{V}(f) \,\f) \big) - \nabla_{\bphi} \big(\mathcal{V}(f) \big) \nonumber\\
&= \Big( \frac{3 \mathcal{V}({e})}{\mathcal{V}(f)} \Big) \Big( \sin\btheta \odot \cos\bphi \odot ([L_\mathrm{V}(f)]_{\B,\I} \f^2_{\I} + [L_\mathrm{V}(f)]_{\B,\B} \f^2_{\B}) \nonumber\\
&\quad \quad \quad \quad ~~~~ - \sin\btheta \odot \sin\bphi \odot ([L_\mathrm{V}(f)]_{\B,\I} \f^1_{\I} + [L_\mathrm{V}(f)]_{\B,\B} \f^1_{\B})\Big) \nonumber\\
& \quad ~ - \Big( 1 + \frac{\mathcal{V}({e}) \, \trace (\f^{\top} L_\mathrm{V}(f) \,\f) }{2 (\mathcal{V}(f))^2}  \Big) \nabla_{\bphi}(\mathcal{V}(f)). 
\end{align}
\end{subequations}
Based on the explicit gradient formulae of $E_\mathrm{I}$ in \eqref{eq:Grad}, we develop the preconditioned nonlinear CG method for solving the problem \eqref{eq:IEM}, which is introduced in detail in the next section.

\section{Preconditioned nonlinear CG method of IEM}
\label{sec:4}

Now we demonstrate the preconditioned nonlinear CG method of IEM for ball-shaped volume-preserving parameterizations. 

Note that the target shape is a solid unit ball. To achieve a better mapping, it is natural to perform an anisotropic scaling transformation (AST) to normalize the bounding box of the mesh model before the ball-shaped mapping. 
Let 
$$
\v = \begin{bmatrix}
v_1^1 & v_1^2 & v_1^3 \\
\vdots & \vdots & \vdots \\
v_n^1 & v_n^2 & v_n^3
\end{bmatrix}
$$
be the matrix representation of vertex set $\mathbb{V}(\M)$ and $\c = \frac{1}{n}\v^\top \1_n$ be the center of the vertices. 
The AST $T_a:\mathbb{R}^3\to\mathbb{R}^3$ is formulated as
\begin{equation} \label{eq:AST}
T_a(\x) = 
\begin{bmatrix}
r_1 \\
& r_2 \\
&& r_3
\end{bmatrix}
R(\x - \c),
\end{equation}
where $R\in \mathrm{SO}(3)$ is a suitable rotation and $r_\ell = 1/\max_{1\leq s\leq n} [R(v_s - \c)]_\ell$, for $\ell=1,2,3$. The matrix $R$ is chosen to rotate the principal components to the standard Euclidean frame. 
Let 
\begin{equation} \label{eq:P}
P = \v - \1_n \c^\top\in\R^{n\times 3}
\end{equation}
be the matrix representation of centralized vertices. Each row of $P$ is a centralized vertex. 
The rotation matrix is computed by the singular value decomposition (SVD) 
\begin{equation} \label{eq:SVD}
P = U \Sigma V^\top.
\end{equation}
The rotation matrix in \eqref{eq:AST} is then given by $R = V^\top$.

In practice, the initial mapping $f^{(0)}$ is computed by the fixed-point method of the VSEM in \cite[Algorithm 4.4]{YuLL19} with a few iteration steps, e.g., 15 steps. At the $k$th iteration step, we represent the mapping $f^{(k)}$ by
\begin{equation} \label{eq:sphere f}
\widehat{\f}^{(k)} =
\begin{bmatrix}
{\f_\I^1}^{(k)} \\
{\f_\I^2}^{(k)} \\
{\f_\I^3}^{(k)} \\
\btheta^{(k)} \\
\bphi^{(k)}
\end{bmatrix}.
\end{equation} 
The corresponding gradient vector 
\begin{equation} \label{eq:hat g}
\widehat{\g}^{(k)} 
\coloneqq \nabla E_\mathrm{I}(\widehat{\f}^{(k)}) 
= \begin{bmatrix}
\nabla_{\f_\I^1} E_\mathrm{I}(f^{(k)}) \\
\nabla_{\f_\I^2} E_\mathrm{I}(f^{(k)}) \\
\nabla_{\f_\I^3} E_\mathrm{I}(f^{(k)}) \\
\nabla_{\btheta} E_\mathrm{I}(f^{(k)}) \\
\nabla_{\bphi}   E_\mathrm{I}(f^{(k)})
\end{bmatrix}
\eqqcolon \begin{bmatrix}
{\g_\I^1}^{(k)} \\
{\g_\I^2}^{(k)} \\
{\g_\I^3}^{(k)} \\
\g_{\btheta}^{(k)} \\
\g_{\bphi}^{(k)}
\end{bmatrix}
\end{equation}
is explicitly formulated in \eqref{eq:Grad}. For $k\geq 1$, we update the interior and boundary mappings along the CG directions with the step length $\alpha_k$ as
\begin{subequations} \label{eq:PCG}
\begin{equation}
\widehat{\f}^{(k+1)} = \widehat{\f}^{(k)} + \alpha_k \widehat{\p}^{(k)}, 
\end{equation}
where $\widehat{\p}^{(k)}$ is the CG direction at $k$th step given by
\begin{equation} \label{eq:PCGb}
\widehat{\p}^{(k)} =
\begin{bmatrix}
{\p_\I^1}^{(k)}\\
{\p_\I^2}^{(k)}\\
{\p_\I^3}^{(k)}\\
\p_{\btheta}^{(k)}\\
\p_{\bphi}^{(k)}
\end{bmatrix} 
= - M^{-1} \widehat{\g}^{(k)} + \beta_k \widehat{\p}^{(k-1)}.
\end{equation}
provided that $\widehat{\p}^{(0)} = - M^{-1} \widehat{\g}^{(0)}$ and the corrective scalar 
\begin{equation} \label{eq:beta}
\beta_k = \frac{ {\widehat{\g}^{(k)}}{}^\top M^{-1} \widehat{\g}^{(k)} }{ {\widehat{\g}^{(k-1)}}{}^\top M^{-1} \widehat{\g}^{(k-1)}}
\end{equation}
\end{subequations}
with the preconditioner 
$$
M = \begin{bmatrix}
I_3 \otimes [L_\mathrm{V}(f^{(0)})]_{\I,\I} \\
& I_2 \otimes [L_\mathrm{V}(f^{(0)})]_{\B,\B}
\end{bmatrix}.
$$
In practice, we computed the preordered Cholesky decomposition so that linear systems of $[L_\mathrm{V}(f^{(0)})]_{\I, \I}$ and $[L_\mathrm{V}(f^{(0)})]_{\B, \B}$ can be efficiently solved by forward and backward substitutions. 

To estimate the optimal step length $\alpha_k$, we define $\Phi^{(k)}(\alpha): \mathbb{R} \rightarrow \mathbb{R}$ as 
$$
\Phi^{(k)}(\alpha) = E_{I} \big(\widehat{\f}^{(k)} +\alpha \widehat{\p}^{(k)} \big),
$$
It is desirable to choose the minimizer $\alpha_k$ of $\Phi^{(k)}$ that satisfies $\nabla \Phi^{(k)}(\alpha_k) = 0$ as the step lengths. However, it is not feasible due to the nonlinearity of the Laplacian matrix \eqref{eq:VSLaplacian}.
Instead, we approximate $\Phi^{(k)}$ by a quadratic function $\Phi_q^{(k)}$ as
\begin{equation*}
\Phi_q^{(k)} (\alpha) = a_k \alpha^2 + b_k \alpha + c_k,
\end{equation*}
which satisfies
\begin{equation*}
\begin{cases}
\Phi_q^{(k)}(0) = \Phi^{(k)}(0) = E_\mathrm{I}\big(\hf^{(k)}\big), \\
\frac{\d}{\d\alpha} \Phi_q^{(k)}(0) = \frac{\d}{\d\alpha} \Phi^{(k)}(0) 
= {{\p}^{(k)}}^\top \nabla E_\mathrm{I}\big(\hf^{(k)}\big), \\
\Phi_q^{(k)}(\alpha_{k-1}) = \Phi^{(k)}(\alpha_{k-1}) = E_\mathrm{I}\big(\hf^{(k)} + \alpha_{k-1} {\hp}^{(k)}\big).
\end{cases}
\end{equation*}
The coefficients $a_k$, $b_k$, and $c_k$ are thus can be computed by 
\begin{align*}
a_k &= \Phi^{(k)}(0), ~~~
b_k = \frac{\d}{\d\alpha}\Phi^{(k)}(0), ~\text{ and }~\\
c_k &= \frac{1}{\alpha_{k-1}^2} \Big( \Phi^{(k)}(\alpha_{k-1}) - \Phi^{(k)}(0) - \alpha_{k-1}\frac{\d}{\d\alpha}\Phi^{(k)}(0) \Big).
\end{align*}
As a result, we approximate the minimizer of $\Phi^{(k)}(\alpha)$ by the minimizer of $\Phi_q^{(k)}(\alpha)$ as
\begin{equation} \label{eq:alpha}
\alpha_k = \frac{-\frac{\d}{\d\alpha}\Phi^{(k)}(0)}{2 a_k} = \frac{-\alpha_{k-1}^2\frac{\d}{\d\alpha}\Phi^{(k)}(0)}{2\big( \Phi^{(k)}(\alpha_{k-1}) - \Phi^{(k)}(0) - \alpha_{k-1}\frac{\d}{\d\alpha}\Phi^{(k)}(0) \big)}.
\end{equation}

The detailed computational procedure is summarized in Algorithm \ref{alg:IEM}.
\begin{algorithm}
\caption{Preconditioned Nonlinear CG Method for IEM}
\label{alg:IEM}
\begin{algorithmic}[1]
\Require A simply connected tetrahedral mesh $\mathcal{M}$, a tolerance $\epsilon$.
\Ensure A volume-preserving simplicial mapping $f$.
\State Let $P$ be the centralized vertices as in \eqref{eq:P}. \label{alg:IEM_1}
\State Compute the SVD $P=U\Sigma V^\top$ as in \eqref{eq:SVD}.
\State Perform AST to all vertices by $v_s \gets T_a(v_s)$ as in \eqref{eq:AST}. \label{alg:IEM_3}
\State Let $n$ be the number of vertices of $\mathcal{M}$.
\State Let $\B = \lbrace s \,|\, v_s \in \partial \mathcal{M} \rbrace$ and $\I = \lbrace 1, \cdots, n \rbrace \setminus \B$.
\State Compute spherical area-preserving boundary map $\f_\B$ by \cite[Algorithm 4.3]{YuLL19}.
\State Let $L \gets L_\mathrm{V}(e)$ as \eqref{eq:VSLaplacian}.
\State Solve the linear system $L_{\I,\I} \f_\I  = -L_{\I,\B} \f_\B$.
\For {$k = 1, \ldots, 15$}
    \State Update $L \gets L_\mathrm{V}(f)$ as \eqref{eq:VSLaplacian}.
    \State Solve the linear system $L_{\I,\I} \f_\I  = -L_{\I,\B} \f_\B$.
\EndFor
\State Update $L \gets L_\mathrm{V}(f)$ as \eqref{eq:VSLaplacian}.
\State Compute the energy value $E = E_\mathrm{I}(f)$ as \eqref{eq:Ea}.
\State Let preconditioners $M_\I \gets L_{\I,\I}$ and $M_\B \gets L_{\B,\B}$.
\State Perform Cholesky decompositions of $M_\I$ and $M_\B$.
\State Compute spherical coordinates $[\btheta, \bphi]$ by \eqref{eq:sphere_coor}.
\State Let $\delta \leftarrow \infty$.
\While{$\delta > \epsilon$}
    \State Let $E_0 \leftarrow E$.
    \State Compute gradients $\g_{\I}$, $\g_{\btheta}$, $\g_{\bphi}$ by \eqref{eq:Grad}.
    \State Solve $M_\I \h_\I = \g_\I$, $M_\B \h_\btheta = \g_\btheta$ and $M_\B \h_\bphi = \g_\bphi$ by precomputed Cholesky decompositions.
    \If {$\delta = \infty$}
        \State Compute $\lambda \leftarrow \trace(\g_\I^\top \h_\I) + \g_\btheta^\top \h_\btheta + \g_\bphi^\top \h_\bphi$.
        \State Update $\p_\I \leftarrow - \h_\I$, $\p_\btheta \leftarrow - \h_\btheta$ and $\p_\bphi \leftarrow - \h_\bphi$.
    \Else
        \State Let $\lambda_{0} \leftarrow \lambda$.
        \State Compute $\lambda \leftarrow \trace(\g_\I^\top \h_\I) + \g_\btheta^\top \h_\btheta + \g_\bphi^\top \h_\bphi$.
        \State Let $\beta \leftarrow \lambda / \lambda_{0}.$
        \State Update $\p_\I \leftarrow - \h_\I + \beta \p_\I$,  $\p_\btheta \leftarrow - \h_\btheta + \beta \p_\btheta$ and $\p_\bphi \leftarrow - \h_\bphi + \beta \p_\bphi$.
    \EndIf
    \State Compute step length $\alpha$ by \eqref{eq:alpha}.
    \State Update $\f_\I \leftarrow \f_\I + \alpha \p_\I$, $\btheta \leftarrow \btheta + \alpha \p_\btheta$ and $\bphi \leftarrow \bphi + \alpha \p_\bphi$.
    \State Update $\f_\B$ by the inverse spherical coordinates of $(\btheta, \bphi)$ as \eqref{eq:sphere_coor_inv}.
    \State Update $L\gets L_\mathrm{V}(f)$ as \eqref{eq:VSLaplacian}.
    \State Compute the energy value $E = E_\mathrm{I}(f)$ as \eqref{eq:Ea}.
    \State Update $\delta \leftarrow E_0 - E$.
\EndWhile
\end{algorithmic}
\end{algorithm}

\section{Convergence analysis of IEM algorithm} \label{sec:Convergence}

In this section, we provide the proof in detail for the global convergence of the proposed IEM, Algorithm \ref{alg:IEM}. The consequence of the main theorem (Theorem~\ref{thm:2}) is a generalization of the convergence of the nonlinear CG method introduced in \cite[Chapter 5]{NoWr06}. Similar techniques have been applied in the convergence analysis of the authalic energy minimization for disk area-preserving parameterizations \cite{LiYu23}. One of the main differences is that we change the definition of the denominator of $\cos\theta_M^{(k)}$ in the modified Zoutendijk's condition (Lemma~\ref{lma:Zou}) so that the proof of Theorem~\ref{thm:2} would be simpler. For simplicity, in the remaining part of this section, we denote $\widehat \f$ in \eqref{eq:sphere f}, $\widehat \g$ in \eqref{eq:hat g}, and $\widehat \p$ in \eqref{eq:PCGb} as $\f$, $\g$, and $\p$, respectively.

The global convergence of Algorithm \ref{alg:IEM} is theoretically guaranteed under the assumption that each step length $\alpha_k$ satisfies the strong Wolfe conditions 
\begin{subequations} \label{eq:Wolfe}
\begin{align}
E_\mathrm{I}(\f^{(k)} + \alpha_k \p^{(k)}) & \leq E_\mathrm{I}(\f^{(k)}) + c_1 \alpha_k \nabla E_\mathrm{I}(\f^{(k)})^{\top} \p^{(k)}, \label{eq:Wolfe1} \\
|\nabla E_\mathrm{I}(\f^{(k)} + \alpha_k \p^{(k)})^{\top} \p^{(k)}| & \leq c_2 | \nabla E_\mathrm{I}(\f^{(k)})^{\top} \p^{(k)}|, \label{eq:Wolfe2}
\end{align}
\end{subequations}
with $0 < c_1 < c_2 < \frac{1}{2}$. 
The existence of each desirable step length $\alpha_k$ is guaranteed by Lemma~\ref{lma:Wolfe} under the assumption that $\p^{(k)}$ is a descent direction of $E_\mathrm{I}$ at $\f^{(k)}$. 

In the following lemma, we prove the Lipschitz continuities of the objective functional and its gradient with respect to the vertex image, which play crucial roles in the proof of the convergence of IEM.

\begin{lemma} \label{lma:Lipschitz}
The isovolumetric energy $E_\mathrm{I}$ \eqref{eq:Ea} and its gradient $\nabla E_\mathrm{I}$ are Lipschitz continuous. 
\end{lemma}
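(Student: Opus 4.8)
The plan is to work on a compact region of the parameter space and show that $E_\mathrm{I}$ and $\nabla E_\mathrm{I}$ are built from finitely many elementary operations on quantities that are themselves smooth with bounded derivatives. First I would restrict attention to the relevant domain: the boundary images lie on the unit sphere (so the spherical coordinates $(\btheta,\bphi)$ may be taken in a bounded box, using periodicity), and the interior images $\f_\I^1,\f_\I^2,\f_\I^3$ stay in a compact set because the iterates of Algorithm~\ref{alg:IEM} produce a decreasing energy sequence and $E_\mathrm{I}$ is coercive on the interior coordinates once the boundary is pinned to the sphere (a trivial or degenerate mapping pushes $E_\mathrm{V}\to\infty$ or $\mathcal V(f)\to 0^+$). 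On such a compact set $K$ it suffices to prove that $E_\mathrm{I}$ is $C^1$ with bounded gradient and that $\nabla E_\mathrm{I}$ is $C^1$ with bounded Hessian, since a $C^1$ function with bounded derivative on a convex compact set is Lipschitz; alternatively one can invoke local Lipschitz continuity plus compactness directly.

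Next I would decompose $E_\mathrm{I}(f) = \tfrac{\mathcal V(e)}{\mathcal V(f)}E_\mathrm{V}(f) - \mathcal V(f)$ into its pieces. The image volume $\mathcal V(f) = \tfrac16\sum_{[v_i,v_j,v_k]\in\F(\partial\M)} \f_i^\top(\f_j\times\f_k)$ is a fixed multilinear (hence polynomial) function of the boundary image coordinates, so composed with the real-analytic maps $\btheta\mapsto\sin\btheta,\cos\btheta$ etc.\ of \eqref{eq:sphere_coor} it is real-analytic, with all derivatives bounded on $K$. The volumetric stretch energy $E_\mathrm{V}(f) = \sum_{\tau} |f(\tau)|^2/|\tau|$ is likewise a fixed polynomial in all the entries of $\f$ (each $|f(\tau)|$ is a cubic polynomial in the vertex images and $|\tau|$ is a positive constant), so it and its gradient \eqref{eq:GradEs} are polynomial, hence smooth with bounded derivatives on $K$. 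The only non-polynomial ingredient is the factor $1/\mathcal V(f)$; here I would use that $\mathcal V(f)$ is bounded away from $0$ on $K$ (this is exactly where coercivity/the energy-decrease structure is used to exclude $\mathcal V(f)\to 0$), so $1/\mathcal V(f)$ and all its derivatives are bounded there. A product, sum, and composition of finitely many functions that are $C^1$ with bounded first and second derivatives is again such; this gives Lipschitz continuity of both $E_\mathrm{I}$ and $\nabla E_\mathrm{I}$ on $K$.

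The step I expect to require the most care is pinning down the compact domain $K$ rigorously — specifically, arguing that along the algorithm's iterates (or on whatever sublevel set $\{E_\mathrm{I}\le E_\mathrm{I}(\f^{(0)})\}$ one restricts to) the interior vertex images cannot escape to infinity and $\mathcal V(f)$ cannot approach $0$. Coercivity in the interior coordinates follows because $L_\mathrm V(e)$ contributes a term comparable to $\|\f_\I^s\|^2$ up to lower-order boundary terms when the boundary is fixed on the sphere; the lower bound $\mathcal V(f)>0$ is slightly more delicate since $\mathcal V(f)$ is a signed volume, but on the sublevel set $E_\mathrm{I}\ge 0$ forces $\mathcal V(e)E_\mathrm V(f)\ge \mathcal V(f)^2$, and combined with an upper bound on $E_\mathrm V(f)$ (from the energy bound) this controls $\mathcal V(f)$ from above, while a separate argument — e.g.\ that the starting map is orientation-preserving and the continuous path of iterates cannot cross $\mathcal V(f)=0$ without $E_\mathrm{I}\to\infty$ — controls it from below. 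Once $K$ is fixed, the remaining verification is the routine bookkeeping sketched above, so I would state it as such rather than grind through the explicit Hessian entries.
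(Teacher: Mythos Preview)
Your approach is correct but takes a more elaborate route than the paper. The paper's proof is essentially two lines: it declares the domain to be the compact cube $[-1,1]^{3n}$ (on the grounds that a ball-shaped mapping sends every vertex into the closed unit ball), asserts $E_\mathrm{I}$ is smooth there, and then applies the mean value theorem to extract Lipschitz constants $C_1=\max_{[-1,1]^{3n}}\|\nabla E_\mathrm{I}\|$ and $C_2=\max_{[-1,1]^{3n}}\|\nabla^2 E_\mathrm{I}\|$. No coercivity, no sublevel sets, no decomposition into polynomial pieces.

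What you do differently is to build the compact domain $K$ from a sublevel set of $E_\mathrm{I}$ and argue coercivity in the interior variables and positivity of $\mathcal V(f)$ along the iterates. This is more work, but it is also more honest: the paper's choice of $[-1,1]^{3n}$ tacitly assumes the interior images stay inside the unit ball (which is not enforced during the optimization), and it never addresses the singularity of $E_\mathrm{I}$ at $\mathcal V(f)=0$, which is certainly attained inside $[-1,1]^{3n}$. Your decomposition into the polynomial pieces $E_\mathrm V$, $\mathcal V(f)$ and the reciprocal $1/\mathcal V(f)$ is exactly the bookkeeping needed to make ``$E_\mathrm{I}$ is smooth'' rigorous once $\mathcal V(f)$ is bounded away from $0$. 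So the paper's argument buys brevity; yours buys an actual justification of why the relevant maxima are finite. Either way the final step---$C^1$ with bounded derivative on a compact convex set implies Lipschitz---is the same.
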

\begin{proof}
Similar to the matrix representation $\f$ of a simplicial mapping $f$ in \eqref{eq:f}, it can also be represented as a vector $({\f^1}^\top, {\f^2}^\top, {\f^3}^\top)^\top$ of length $3n$, provided that $n$ is the number of vertices of the triangular mesh model. 
The objective functional $E_\mathrm{I}$ of ball-shaped simplicial mapping can then be represented a function $E_\mathrm{I}: [-1, 1]^{3n} \rightarrow \mathbb R$ defined on a compact subset $[-1, 1]^{3n}$ of $\mathbb{R}^{3n}$ as
\begin{equation*} \label{eq:E_I_vec}
E_\mathrm{I}(f) \equiv E_\mathrm{I}(\f) \equiv E_\mathrm{I}(f_1^1, \ldots, f_n^1, f_1^2, \ldots, f_n^2, f_1^3, \ldots, f_n^3).
\end{equation*}
Then, for every ball-shaped simplicial mappings $\x$ and $\y\in[-1,1]^{3n}$, we define $\phi(t) = E_\mathrm{I}((1-t)\x + t\y)$. 
Since $E_\mathrm{I}$ is smooth, by the mean value theorem, there exists a value $\xi_1 \in (0,1)$ such that $\z_1 = (1-\xi_1) \x + \xi_1 \y$ and
\[
E_\mathrm{I}(\y) - E_\mathrm{I}(\x) = \phi(1) - \phi(0) = \phi'(\xi_1) = \nabla E_\mathrm{I}(\z_1)^\top (\y - \x).
\]
Since the domain of $E_\mathrm{I}$ is compact,
\[
\|E_\mathrm{I}(\y) - E_\mathrm{I}(\x)\| \leq C_1 \|\y - \x\|,
\]
where $C_1 = \max_{\f\in [-1,1]^{3n}} \|\nabla E_\mathrm{I}(\f)\|$ is finite, i.e., $E_\mathrm{I}$ is Lipschitz continuous. 

On the other hand, to show the Lipschitz continuity of $\nabla E_\mathrm{I}$, we define $\varphi(t) = (\nabla E_\mathrm{I}(\y) - \nabla E_\mathrm{I}(\x))^\top \nabla E_\mathrm{I}((1-t)\x + t\y)$.
Since $\nabla E_\mathrm{I}$ is smooth, by the mean value theorem, there exists a value $\xi_2 \in(0,1)$ such that $\z_2 = (1-\xi_2) \x + \xi_2 \y$ and
\begin{align*}
\|\nabla E_\mathrm{I}(\y) - \nabla E_\mathrm{I}(\x)\|^2 
&= \varphi(1) - \varphi(0) = \varphi'(\xi_2) \\
&= (\nabla E_\mathrm{I}(\y) - \nabla E_\mathrm{I}(\x))^\top \nabla^2 E_\mathrm{I}(\z_2) (\y - \x).
\end{align*}
Since the domain of $E_\mathrm{I}$ is compact, 
\begin{align*}
\|\nabla E_\mathrm{I}(\y) - \nabla E_\mathrm{I}(\x)\|^2 \leq C_2 \|\nabla E_\mathrm{I}(\y) - \nabla E_\mathrm{I}(\x) \| \|\y - \x\|,
\end{align*}
where $C_2 = \max_{\f\in [-1,1]^{3n}} \|\nabla^2 E_\mathrm{I}(\f)\|$ is finite, i.e., $\nabla E_\mathrm{I}$ is Lipschitz continuous. 
\end{proof}

To guarantee the energy $E_\mathrm{I}$ decreases sufficiently at the $k$th iteration step, we require not only the descent direction $\p^{(k)}$, but also a proper step length $\alpha_k$ that satisfies the strong Wolfe conditions \eqref{eq:Wolfe}. Since $E_\mathrm{I}$ is bounded below, the following lemma guarantees the existence of such desirable step lengths.

\begin{lemma}[{\cite[Lemma 3.1]{NoWr06}}]
\label{lma:Wolfe}
Intervals of step lengths that satisfy the strong Wolfe conditions \eqref{eq:Wolfe} always exist, provided that $0 < c_1 < c_2 < 1$ and $\p^{(k)}$ is a descent direction at $\f^{(k)}$.
\end{lemma}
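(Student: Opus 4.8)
The plan is to reduce to the one-dimensional restriction of $E_\mathrm{I}$ along the search direction and apply the classical existence argument for Wolfe step lengths. First I would set $\phi(\alpha) = E_\mathrm{I}(\f^{(k)} + \alpha\,\p^{(k)})$ for $\alpha \geq 0$. Since $\p^{(k)}$ is a descent direction at $\f^{(k)}$, we have $\phi'(0) = \nabla E_\mathrm{I}(\f^{(k)})^{\top}\p^{(k)} < 0$, and $\phi$ is continuously differentiable because $E_\mathrm{I}$ is smooth (Lemma~\ref{lma:Lipschitz}). Introduce the sufficient-decrease line $\ell(\alpha) = E_\mathrm{I}(\f^{(k)}) + c_1\,\alpha\,\phi'(0)$, whose slope $c_1\phi'(0)$ is strictly negative, so $\ell(\alpha) \to -\infty$ as $\alpha \to \infty$. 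On the other hand, $\phi$ is bounded below, since $E_\mathrm{I} \geq 0$ by Theorem~\ref{cor:2.2}. Therefore the graphs of $\phi$ and $\ell$ must intersect; let $\alpha' > 0$ be the smallest positive root of $\phi(\alpha) = \ell(\alpha)$. Near $\alpha = 0$ one has $\phi(\alpha) - \ell(\alpha) \approx (1 - c_1)\,\alpha\,\phi'(0) < 0$ (using $0 < c_1 < 1$), so $\phi < \ell$ strictly on $(0, \alpha')$; in particular the Armijo condition \eqref{eq:Wolfe1} holds on the whole interval $(0, \alpha']$.

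Next I would exhibit a point where the curvature condition \eqref{eq:Wolfe2} also holds. Applying the mean value theorem to $\phi$ on $[0, \alpha']$ gives some $\alpha'' \in (0, \alpha')$ with $\phi(\alpha') - \phi(0) = \alpha'\,\phi'(\alpha'')$. Combining this with the crossing identity $\phi(\alpha') = \ell(\alpha') = \phi(0) + c_1\,\alpha'\,\phi'(0)$ yields $\phi'(\alpha'') = c_1\,\phi'(0)$. Since $0 < c_1 < c_2$ and $\phi'(0) < 0$, it follows that $|\phi'(\alpha'')| = c_1\,|\phi'(0)| < c_2\,|\phi'(0)|$, so $\alpha''$ satisfies the strong curvature condition \eqref{eq:Wolfe2}; and because $\alpha'' < \alpha'$, it also satisfies \eqref{eq:Wolfe1} strictly. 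Finally, since $\phi$ and $\phi'$ are continuous and both defining inequalities of \eqref{eq:Wolfe} hold with strict inequality at $\alpha''$, they continue to hold on an open interval around $\alpha''$, which proves the claim.

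I do not expect a genuine obstacle here, as this is the standard Wolfe-existence lemma; the one point requiring care is the appeal to the lower bound $E_\mathrm{I} \geq 0$ from Theorem~\ref{cor:2.2}, which is what guarantees that the Armijo line $\ell$ eventually drops below $\phi$ and hence that the first crossing $\alpha'$ exists. The remainder is continuity and mean-value bookkeeping, with the hypotheses $0 < c_1 < c_2 < 1$ used exactly where indicated (positivity of $c_1$ for the negative slope of $\ell$, $c_1 < 1$ for $\phi < \ell$ near $0$, and $c_1 < c_2$ for the strict curvature inequality). Since the statement is quoted verbatim as \cite[Lemma 3.1]{NoWr06}, it would also be entirely legitimate to omit the proof and defer to that reference.
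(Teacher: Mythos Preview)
Your proposal is correct and reproduces the standard existence argument from \cite[Lemma~3.1]{NoWr06}; the paper itself does not supply a proof for this lemma but simply cites that reference, so your final remark that deferring to \cite{NoWr06} is legitimate is exactly what the paper does. The only adaptation you add beyond the cited source is invoking Theorem~\ref{cor:2.2} for the lower bound $E_\mathrm{I}\geq 0$, which is the appropriate substitute for the generic ``bounded below'' hypothesis in Nocedal--Wright.
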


For $k = 0$, since $M^{-1}$ is symmetric positive definite, $\p^{(0)} = - M^{-1} \g^{(0)}$ is a descent direction, and thus the existence of a desirable step length $\alpha^{(0)}$ satisfying \eqref{eq:Wolfe} is guaranteed by Lemma~\ref{lma:Wolfe}. To keep $\p^{(k)}$ being the descent direction for $k \geq 1$, the constant $c_2$ should be chosen in $(0,\frac{1}{2})$. The following lemma, a variant of \cite[Lemma 5.6]{NoWr06}, provides crucial bounds of the value $\frac{\nabla E_\mathrm{I}(\f^{(k)})^{\top} \p^{(k)}}{\| \nabla E_\mathrm{I}(\f^{(k)}) \|^2_{M^{-1}}}$, which serves as a core step for the proof of the global convergence of the IEM. 

\begin{lemma}
\label{lma:descent} 
Let $M$ be a symmetric positive definite matrix. Suppose the step length $\alpha_k$ satisfies the strong Wolfe conditions \eqref{eq:Wolfe}. Then, the vector $\p^{(k)}$ satisfies 
\begin{equation} \label{eq:descent_cond}
-\frac{1}{1-c_2} \leq \frac{\nabla E_\mathrm{I}(\f^{(k)})^{\top} \p^{(k)}}{\| \nabla E_\mathrm{I}(\f^{(k)}) \|^2_{M^{-1}}} \leq \frac{2c_2 - 1}{1 - c_2},
\end{equation}
for all $k \geq 0$, where $\|\cdot\|_{M^{-1}}$ denotes the $M^{-1}$-norm defined as 
$
\|\v\|_{M^{-1}} = \sqrt{\v^\top M^{-1}\v}.
$
\end{lemma}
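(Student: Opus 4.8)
The plan is to prove the bounds in \eqref{eq:descent_cond} by induction on $k$, mimicking the argument for \cite[Lemma 5.6]{NoWr06} but tracking the preconditioned inner product throughout. For the base case $k=0$, we have $\p^{(0)} = -M^{-1}\g^{(0)}$, so $\nabla E_\mathrm{I}(\f^{(0)})^\top \p^{(0)} = -{\g^{(0)}}^\top M^{-1} \g^{(0)} = -\|\g^{(0)}\|_{M^{-1}}^2$, which gives the ratio exactly $-1$. Since $0 < c_2 < \tfrac12$, one checks directly that $-\tfrac{1}{1-c_2} \leq -1 \leq \tfrac{2c_2-1}{1-c_2}$ (the right inequality uses $2c_2 - 1 < 0$ and $1-c_2 < 1$, giving $\tfrac{2c_2-1}{1-c_2} > 2c_2 - 1 \geq -1$ is not quite it — rather $\tfrac{2c_2-1}{1-c_2} \geq -1 \iff 2c_2 - 1 \geq -(1-c_2) \iff c_2 \geq 0$), so the base case holds.

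For the inductive step, assume \eqref{eq:descent_cond} holds at $k-1$. From the CG recurrence \eqref{eq:PCGb}, $\p^{(k)} = -M^{-1}\g^{(k)} + \beta_k \p^{(k-1)}$, so
\begin{equation*}
\frac{\nabla E_\mathrm{I}(\f^{(k)})^\top \p^{(k)}}{\|\g^{(k)}\|_{M^{-1}}^2} = -1 + \beta_k \frac{{\g^{(k)}}^\top \p^{(k-1)}}{\|\g^{(k)}\|_{M^{-1}}^2}.
\end{equation*}
Using the explicit form $\beta_k = \|\g^{(k)}\|_{M^{-1}}^2 / \|\g^{(k-1)}\|_{M^{-1}}^2$ from \eqref{eq:beta}, the correction term becomes ${\g^{(k)}}^\top \p^{(k-1)} / \|\g^{(k-1)}\|_{M^{-1}}^2$. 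I would then invoke the strong Wolfe curvature condition \eqref{eq:Wolfe2} at step $k-1$, namely $|\nabla E_\mathrm{I}(\f^{(k)})^\top \p^{(k-1)}| \leq c_2 |\nabla E_\mathrm{I}(\f^{(k-1)})^\top \p^{(k-1)}|$, to bound $|{\g^{(k)}}^\top \p^{(k-1)}| \leq c_2 |\nabla E_\mathrm{I}(\f^{(k-1)})^\top \p^{(k-1)}|$, and combine this with the inductive hypothesis (which bounds the latter by $\tfrac{1}{1-c_2}\|\g^{(k-1)}\|_{M^{-1}}^2$ in absolute value) to get $|{\g^{(k)}}^\top \p^{(k-1)}| / \|\g^{(k-1)}\|_{M^{-1}}^2 \leq \tfrac{c_2}{1-c_2}$. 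Hence the ratio at step $k$ lies in $[-1 - \tfrac{c_2}{1-c_2},\, -1 + \tfrac{c_2}{1-c_2}] = [-\tfrac{1}{1-c_2},\, \tfrac{2c_2-1}{1-c_2}]$, completing the induction.

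The main subtlety — and the reason the paper remarks on changing the denominator of $\cos\theta_M^{(k)}$ — is the bookkeeping with the $M^{-1}$-norm: the numerator of $\beta_k$ and the normalizing quantity $\|\g^{(k)}\|_{M^{-1}}^2$ must match exactly so that the telescoping identity collapses cleanly, and one must be careful that $\nabla E_\mathrm{I}(\f^{(k)})^\top \p^{(k-1)}$ equals ${\g^{(k)}}^\top \p^{(k-1)}$ (ordinary Euclidean product, not the $M^{-1}$ product), whereas the normalization uses the $M^{-1}$ product. I expect the only real obstacle is verifying the inductive hypothesis is applied to the right quantity — specifically, rewriting $|\nabla E_\mathrm{I}(\f^{(k-1)})^\top \p^{(k-1)}| \leq \tfrac{1}{1-c_2} \|\g^{(k-1)}\|_{M^{-1}}^2$ from the lower bound at $k-1$ — but this is immediate since the lower bound $-\tfrac{1}{1-c_2}$ combined with $\p^{(k-1)}$ being a descent direction (so the ratio is negative) yields exactly that absolute-value estimate. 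Everything else is the routine arithmetic of assembling the two one-sided bounds.
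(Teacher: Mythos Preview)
Your proposal is correct and follows essentially the same inductive argument as the paper: verify the ratio equals $-1$ at $k=0$, then use the recurrence \eqref{eq:PCGb} together with the Fletcher--Reeves choice \eqref{eq:beta} to reduce the ratio at step $k$ to $-1 + {\g^{(k)}}^\top \p^{(k-1)} / \|\g^{(k-1)}\|_{M^{-1}}^2$, and bound the correction term via the curvature condition \eqref{eq:Wolfe2} and the induction hypothesis. The only cosmetic difference is that the paper writes the two-sided estimate on $\nabla E_\mathrm{I}(\f^{(k)})^\top \p^{(k-1)}$ directly rather than passing through absolute values, but the arithmetic is identical.
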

\begin{proof}
We prove \eqref{eq:descent_cond} by induction on $k$. 
For $k = 0$, since $\p^{(0)} = - M^{-1} \g^{(0)}$, we have
\begin{equation*}
\frac{\nabla E_\mathrm{I}(\f^{(0)})^{\top} \p^{(0)}}{\| \nabla E_\mathrm{I}(\f^{(0)}) \|^2_{M^{-1}}}
= \frac{-{\g^{(0)}}^{\top} M^{-1} \g^{(0)}}{{\g^{(0)}}^{\top} M^{-1} \g^{(0)}}
= -1,
\end{equation*}
which satisfies \eqref{eq:descent_cond}. Assume the induction hypothesis \eqref{eq:descent_cond} holds for $k=n$. Then, for $k = n + 1$, with \eqref{eq:PCGb} and \eqref{eq:beta}, we have
{\small\begin{align}
\frac{\nabla E_\mathrm{I}({\f}^{(n+1)})^{\top} \p^{(n+1)}}{\| \nabla E_\mathrm{I}({\f}^{(n+1)}) \|^2_{M^{-1}}} 
&= \frac{-{\g^{(n+1)}}^{\top} M^{-1} \g^{(n+1)}}{{\g^{(n+1)}}^{\top} M^{-1} \g^{(n+1)}} + \beta_{n+1} \frac{\nabla E_\mathrm{I}({\f}^{(n+1)})^{\top} \p^{(n)}}{\| \nabla E_\mathrm{I}({\f}^{(n+1)}) \|^2_{M^{-1}}} \nonumber \\
&= -1 + \frac{{{\g}^{(n+1)}}^\top M^{-1} {\g}^{(n+1)}}{{\g^{(n)}}^\top M^{-1} \g^{(n)}} \frac{\nabla E_\mathrm{I}({\f}^{(n+1)})^{\top} \p^{(n)}}{{{\g}^{(n+1)}}^\top M^{-1} {\g}^{(n+1)}} 
= -1 + \frac{\nabla E_\mathrm{I}({\f}^{(n+1)})^{\top} \p^{(n)}}{\| \nabla E_\mathrm{I}({\f}^{(n)}) \|^2_{M^{-1}}}. \label{eq:4.13} 
\end{align}}
Then, the second Wolfe condition \eqref{eq:Wolfe2} implies
$
|\nabla E_\mathrm{I}({\f}^{(n+1)})^{\top} \p^{(n)}| \leq -c_2 \nabla E_\mathrm{I}({\f}^{(n)})^{\top} \p^{(n)},
$ 
i.e., 
\begin{align}
c_2 \nabla E_\mathrm{I}({\f}^{(n)})^{\top} \p^{(n)} \leq \nabla E_\mathrm{I}({\f}^{(n+1)})^{\top} \p^{(n)} \leq -c_2 \nabla E_\mathrm{I}({\f}^{(n)})^{\top} \p^{(n)}. \label{eq:4.15} 
\end{align}
As a result, from \eqref{eq:4.13} and \eqref{eq:4.15}, we have
\begin{align*}
-1 + c_2\frac{\nabla E_\mathrm{I}({\f}^{(n)})^\top \p^{(n)}}{\| \nabla E_\mathrm{I}({\f}^{(n)}) \|^2_{M^{-1}}}
&\leq \frac{\nabla E_\mathrm{I}({\f}^{(n+1)})^\top \p^{(n+1)}}{\| \nabla E_\mathrm{I}({\f}^{(n+1)}) \|^2_{M^{-1}}} 
\leq -1 - c_2\frac{\nabla E_\mathrm{I}({\f}^{(n)})^{\top} \p^{(n)}}{\| \nabla E_\mathrm{I}({\f}^{(n)}) \|^2_{M^{-1}}}.
\end{align*}
Then, from the induction hypothesis \eqref{eq:descent_cond} with $k = n$, we have
\begin{equation*}
-\frac{1}{1-c_2} = -1 - \frac{c_2}{1-c_2} \leq \frac{\nabla E_\mathrm{I}({\f}^{(n+1)})^{\top} \p^{(n+1)}}{\| \nabla E_\mathrm{I}({\f}^{(n+1)}) \|^2_{M^{-1}}} \leq -1 + \frac{c_2}{1-c_2} = \frac{2c_2 - 1}{1 - c_2}, 
\end{equation*}
i.e., \eqref{eq:descent_cond} holds for $k=n+1$.
\end{proof}

By applying Lemma~\ref{lma:descent} with a value $c_2$ chosen in $(0,\frac{1}{2})$, the vector $\p^{(k)}$ could be guaranteed to be a descent direction of $E_I$ at ${\f}^{(k)}$, which is introduced in the following corollary. 

\begin{corollary} \label{cor:p}
Let $M$ be a symmetric positive definite matrix. 
Suppose the step length $\alpha_k$ satisfies the strong Wolfe conditions \eqref{eq:Wolfe} with $0<c_2<\frac{1}{2}$. Then, $\p^{(k)}$ is a descent direction of $E_I$ at ${\f}^{(k)}$.
\end{corollary}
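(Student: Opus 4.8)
The plan is to extract the statement directly from the upper bound in Lemma~\ref{lma:descent}, so that the proof is essentially a one-line consequence. Being a descent direction of $E_\mathrm{I}$ at $\f^{(k)}$ means exactly that $\nabla E_\mathrm{I}(\f^{(k)})^{\top} \p^{(k)} < 0$, so the only things to verify are that the right-hand side of the inequality
$$
\frac{\nabla E_\mathrm{I}(\f^{(k)})^{\top} \p^{(k)}}{\| \nabla E_\mathrm{I}(\f^{(k)}) \|^2_{M^{-1}}} \leq \frac{2c_2 - 1}{1 - c_2}
$$
from Lemma~\ref{lma:descent} is strictly negative and that the denominator on the left is strictly positive, so that clearing the denominator preserves the sign.

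First I would observe that the hypothesis $0 < c_2 < \frac{1}{2}$ forces $2c_2 - 1 < 0$ and $1 - c_2 > 0$, hence $\frac{2c_2 - 1}{1 - c_2} < 0$; this is precisely where the sharper requirement $c_2 < \frac{1}{2}$, rather than merely $c_2 < 1$, is used. Next, since $M$ is symmetric positive definite, so is $M^{-1}$, and therefore $\| \nabla E_\mathrm{I}(\f^{(k)}) \|^2_{M^{-1}} = \nabla E_\mathrm{I}(\f^{(k)})^{\top} M^{-1} \nabla E_\mathrm{I}(\f^{(k)}) > 0$ whenever $\nabla E_\mathrm{I}(\f^{(k)}) \neq \0$. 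Multiplying the displayed inequality through by this positive quantity yields
$$
\nabla E_\mathrm{I}(\f^{(k)})^{\top} \p^{(k)} \leq \frac{2c_2 - 1}{1 - c_2} \, \| \nabla E_\mathrm{I}(\f^{(k)}) \|^2_{M^{-1}} < 0,
$$
which is the desired descent property.

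There is no genuine obstacle in this argument; the only point needing care is the degenerate case $\nabla E_\mathrm{I}(\f^{(k)}) = \0$, where $\f^{(k)}$ is already a stationary point, the iteration of Algorithm~\ref{alg:IEM} terminates, and the notion of a search direction is vacuous. I would therefore note at the outset that the corollary is understood under the standing assumption that the iteration has not yet reached a critical point, and then give the two short estimates above. It is also worth remarking that the corollary closes an inductive loop with Lemma~\ref{lma:Wolfe}: the existence of an $\alpha_k$ satisfying the strong Wolfe conditions, which underlies Lemma~\ref{lma:descent} itself, requires $\p^{(k)}$ to be a descent direction, so the two results are applied in tandem along the iterates.
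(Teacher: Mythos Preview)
Your proposal is correct and follows essentially the same approach as the paper: both use the upper bound from Lemma~\ref{lma:descent} and the fact that $0<c_2<\tfrac{1}{2}$ makes $\frac{2c_2-1}{1-c_2}<0$, then conclude $\nabla E_\mathrm{I}(\f^{(k)})^{\top}\p^{(k)}<0$. The paper packages the sign check as monotonicity of $\varphi(x)=\frac{2x-1}{1-x}$ on $[0,\tfrac{1}{2}]$ with $\varphi(\tfrac{1}{2})=0$, whereas you argue the signs of numerator and denominator directly; your additional remarks on the degenerate case $\nabla E_\mathrm{I}(\f^{(k)})=\0$ and the inductive interplay with Lemma~\ref{lma:Wolfe} are sound and go a bit beyond what the paper states explicitly.
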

\begin{proof}
We define a monotonically increasing function 
$\varphi(x) = \frac{2x-1}{1-x}$ on $[0, \frac{1}{2}]$ with $\varphi(0) = -1$ and $\varphi(\frac{1}{2}) = 0$. 
Then, the second inequality of \eqref{eq:descent_cond} with $0 < c_2 < \frac{1}{2}$ implies
$$
\frac{\nabla E_\mathrm{I}({\f}^{(k)})^{\top} \p^{(k)}}{\| \nabla E_\mathrm{I}({\f}^{(k)}) \|^2_{M^{-1}}} \leq \frac{2c_2 - 1}{1 - c_2}  = \varphi(c_2) < \varphi( \textstyle \frac{1}{2}) = 0.
$$
Thus, $\nabla E_\mathrm{I}({\f}^{(k)})^{\top} \p^{(k)}<0$, and hence $\p^{(k)}$ is a descent direction of $E_I$ at ${\f}^{(k)}$.
\end{proof}

For each iteration of the IEM algorithm, Corollary \ref{cor:p} and Lemma~\ref{lma:Wolfe} guarantee the existence of step lengths satisfying the strong Wolfe condition \eqref{eq:Wolfe} with $0<c_1<c_2<\frac{1}{2}$. 
The Lipschitz continuities of $E_\mathrm{I}$ and $\nabla E_\mathrm{I}$ could be applied to describe further the directional difference between $\p^{(k)}$ and $\nabla E_\mathrm{I}$, which is a variant of the Zoutendijk's condition and serves as a core stone for the proof of the global convergence of the preconditioned nonlinear CG method of the IEM.

\begin{lemma}[modified Zoutendijk's condition] \label{lma:Zou}
Let $M$ be a symmetric positive definite matrix.
Suppose the step length $\alpha_k$ satisfies the strong Wolfe conditions \eqref{eq:Wolfe} with $0<c_1<c_2<\frac{1}{2}$.
Then, 
\begin{subequations} \label{eq:Zou}
\begin{equation}
\sum_{k=0}^{\infty} \cos^2 \theta_{M}^{(k)} \| \nabla E_\mathrm{I}({\f}^{(k)}) \|_{M^{-1}}^2 < \infty,
\end{equation}
where $\cos\theta^{(k)}_M$ is defined as
\begin{equation} \label{eq:cos}
\cos\theta_{M}^{(k)} = -\frac{\nabla E_\mathrm{I}({\f}^{(k)})^\top \p^{(k)}}{\| \nabla E_\mathrm{I}({\f}^{(k)}) \|_{M^{-1}} \| \p^{(k)} \|_{M}},
\end{equation}
provided that $\p^{(k)}$ is given by \eqref{eq:PCG}.
\end{subequations}
\end{lemma}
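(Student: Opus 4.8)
The plan is to run the classical Zoutendijk argument (cf.\ \cite[Theorem~3.2]{NoWr06}), adapted to the preconditioner $M$ and to the mixed $M$/$M^{-1}$ norms in \eqref{eq:cos}. Write $\g^{(k)}$ for $\nabla E_\mathrm{I}(\f^{(k)})$; by Corollary~\ref{cor:p} each $\p^{(k)}$ is a descent direction, so ${\g^{(k)}}^\top\p^{(k)}<0$, and by Lemma~\ref{lma:Wolfe} each $\alpha_k$ satisfying \eqref{eq:Wolfe} exists.

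First I would combine the curvature condition \eqref{eq:Wolfe2} with the Lipschitz continuity of $\nabla E_\mathrm{I}$ from Lemma~\ref{lma:Lipschitz} (with constant, say, $L$) to lower-bound the step length. Subtracting ${\g^{(k)}}^\top\p^{(k)}$ from the inequality ${\g^{(k+1)}}^\top\p^{(k)}\geq c_2\,{\g^{(k)}}^\top\p^{(k)}$ implied by \eqref{eq:Wolfe2} gives $(\g^{(k+1)}-\g^{(k)})^\top\p^{(k)}\geq (c_2-1)\,{\g^{(k)}}^\top\p^{(k)}>0$, while $\f^{(k+1)}-\f^{(k)}=\alpha_k\p^{(k)}$ and Lipschitz continuity give $(\g^{(k+1)}-\g^{(k)})^\top\p^{(k)}\leq L\,\alpha_k\,\|\p^{(k)}\|^2$. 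Hence $\alpha_k\geq (1-c_2)(-{\g^{(k)}}^\top\p^{(k)})/(L\|\p^{(k)}\|^2)$. Substituting this into the sufficient-decrease condition \eqref{eq:Wolfe1}, which only lowers the right-hand side further since ${\g^{(k)}}^\top\p^{(k)}<0$, yields $E_\mathrm{I}(\f^{(k+1)})\leq E_\mathrm{I}(\f^{(k)})-\tfrac{c_1(1-c_2)}{L}\,({\g^{(k)}}^\top\p^{(k)})^2/\|\p^{(k)}\|^2$.

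Next I would rewrite the last term in the quantities of \eqref{eq:cos}. By definition $({\g^{(k)}}^\top\p^{(k)})^2=\cos^2\theta_M^{(k)}\,\|\g^{(k)}\|_{M^{-1}}^2\,\|\p^{(k)}\|_M^2$, and since $M$ is symmetric positive definite, $\|\p^{(k)}\|_M^2\geq\lambda_{\min}(M)\,\|\p^{(k)}\|^2$ with $\lambda_{\min}(M)>0$; therefore $({\g^{(k)}}^\top\p^{(k)})^2/\|\p^{(k)}\|^2\geq\lambda_{\min}(M)\,\cos^2\theta_M^{(k)}\,\|\g^{(k)}\|_{M^{-1}}^2$. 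Combining with the previous display gives $E_\mathrm{I}(\f^{(k+1)})\leq E_\mathrm{I}(\f^{(k)})-c\,\cos^2\theta_M^{(k)}\,\|\g^{(k)}\|_{M^{-1}}^2$ with $c=c_1(1-c_2)\lambda_{\min}(M)/L>0$. Telescoping from $k=0$ to $N$ and invoking $E_\mathrm{I}\geq 0$ from Theorem~\ref{cor:2.2} gives $c\sum_{k=0}^{N}\cos^2\theta_M^{(k)}\|\g^{(k)}\|_{M^{-1}}^2\leq E_\mathrm{I}(\f^{(0)})$ for every $N$; letting $N\to\infty$ establishes \eqref{eq:Zou}.

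The argument is routine once the norms are tracked correctly, and the one place that needs care is precisely this norm bookkeeping: the curvature/Lipschitz step naturally produces the Euclidean ratio $({\g^{(k)}}^\top\p^{(k)})^2/\|\p^{(k)}\|^2$, whereas \eqref{eq:cos} carries the $M^{-1}$-norm on the gradient and the $M$-norm on the direction, so one must insert the spectral bound $\|\p^{(k)}\|_M^2\geq\lambda_{\min}(M)\|\p^{(k)}\|^2$ without silently losing a constant. A secondary point is that Lemma~\ref{lma:Lipschitz} must be applied to $\nabla E_\mathrm{I}$ viewed as a function of the optimization variable in \eqref{eq:sphere f}; because the spherical-coordinate change \eqref{eq:sphere_coor} is smooth with uniformly bounded derivatives and maps into the compact unit sphere, the Lipschitz constant transfers to this parametrization on the relevant sublevel set, so nothing is lost.
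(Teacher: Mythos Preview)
Your argument is correct and follows essentially the same route as the paper's proof: combine the curvature condition with the Lipschitz bound on $\nabla E_\mathrm{I}$ to lower-bound $\alpha_k$, substitute into the Armijo condition, pass to the $M$/$M^{-1}$ norms, telescope, and invoke $E_\mathrm{I}\ge 0$. The only cosmetic difference is where the norm conversion happens---the paper switches to $M$-norms before bounding $\alpha_k$ (via $\|M^{-1/2}\|_M$) whereas you stay Euclidean and insert $\lambda_{\min}(M)$ at the end---but since $\|M^{-1/2}\|_M^2=1/\lambda_{\min}(M)$ these yield the same constant.
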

\begin{proof} 
The step length $\alpha_k$ satisfies the Wolfe condition \eqref{eq:Wolfe}, by Corollary \ref{cor:p}, $\p^{(k)}$ is a descent direction and thus the second Wolfe condition \eqref{eq:Wolfe2} holds, i.e., 
\begin{equation} \label{eq:lma4.4_1}
\nabla E_\mathrm{I}({\f}^{(k+1)})^{\top} \p^{(k)} \geq c_2 \nabla E_\mathrm{I}({\f}^{(k)})^{\top} \p^{(k)}.
\end{equation}
By subtracting $\nabla E_\mathrm{I}({\f}^{(k)})^{\top} \p^{(k)}$ to both sides of \eqref{eq:lma4.4_1}, we obtain
\begin{equation} \label{eq:lma4.4_2}
\big(\nabla E_\mathrm{I}({\f}^{(k+1)}) - \nabla E_\mathrm{I}({\f}^{(k)})\big)^{\top} \p^{(k)} \geq (c_2-1) \nabla E_\mathrm{I}({\f}^{(k)})^{\top} \p^{(k)}.
\end{equation}
Since $\nabla E_\mathrm{I}$ is Lipschitz continuous, \eqref{eq:lma4.4_2} implies
\begin{equation} \label{eq:lma4.4_3}
(c_2-1) \nabla E_\mathrm{I}({\f}^{(k)})^{\top} \p^{(k)}
\leq \| \nabla E_\mathrm{I}({\f}^{(k+1)}) - \nabla E_\mathrm{I}({\f}^{(k)}) \| \|\p^{(k)}\| 
\leq C \, \| {\f}^{(k+1)} - {\f}^{(k)} \| \| \p^{(k)}\|.
\end{equation}
The values in \eqref{eq:lma4.4_3} can be estimated in $M$-norm as
\begin{equation} \label{eq:lma4.4_4}
\| {\f}^{(k+1)} - {\f}^{(k)} \|
= \| M^{-1/2} ({\f}^{(k+1)} - {\f}^{(k)}) \|_{M}
\leq \|M^{-1/2}\|_{M} \| {\f}^{(k+1)} - {\f}^{(k)} \|_{M},
\end{equation}
and
\begin{equation} \label{eq:lma4.4_5}
\| \p^{(k)}\| = \| M^{-1/2} \p^{(k)}\|_{M}
\leq \|M^{-1/2}\|_{M} \|\p^{(k)}\|_{M}.
\end{equation}
From \eqref{eq:lma4.4_1}--\eqref{eq:lma4.4_4}, we have
\begin{equation} \label{eq:lma4.4_alpha_k}
\alpha_k = \frac{\| {\f}^{(k+1)} - {\f}^{(k)} \|_{M}}{\|\p^{(k)}\|_{M}}
= \frac{\| {\f}^{(k+1)} - {\f}^{(k)} \|_{M} \|\p^{(k)}\|_{M}}{\|\p^{(k)}\|_{M}^2}
\geq \frac{c_2-1}{ C \|M^{-1/2}\|^2_{M} } \frac{\nabla E_\mathrm{I}({\f}^{(k)})^{\top} \p^{(k)}}{\| \p^{(k)}\|^2_{M}}.
\end{equation}
By substituting \eqref{eq:lma4.4_alpha_k} into the first Wolfe condition \eqref{eq:Wolfe1}, we obtain
\begin{align}
E_\mathrm{I}({\f}^{(k+1)}) 
&\leq E_\mathrm{I}(\f^{(k)}) - c_1 \frac{1 -c_2}{ C \, \|M^{-1/2}\|^2_{M}} \frac{\nabla E_\mathrm{I}({\f}^{(k)})^{\top} \p^{(k)}}{\| \p^{(k)}\|^2_{M}} \nabla E_\mathrm{I}(\f^{(k)})^\top \p^{(k)} \nonumber\\
&\leq E_\mathrm{I}({\f}^{(k)}) - \widetilde C  \frac{(\nabla E_\mathrm{I}({\f}^{(k)})^{\top} \p^{(k)})^2}{\| \p^{(k)}\|^2_{M}} 
= E_\mathrm{I}({\f}^{(k)}) - \widetilde C\cos^2\theta_M^{(k)} \|\nabla E_\mathrm{I}({\f}^{(k)})\|_{M^{-1}}^2, \label{eq:4.12}
\end{align}
where $\widetilde C =\frac{c_1 (1-c_2)}{  C \, \|M^{-1/2}\|^2_{M}} > 0$ and $\cos\theta_M^{(k)}$ is defined as \eqref{eq:cos}. 
The recursive inequality \eqref{eq:4.12} implies
$$
E_\mathrm{I}({\f}^{(k+1)}) \leq E_\mathrm{I}({\f}^{(0)}) - \widetilde C \sum_{j=0}^k \cos^2\theta_M^{(j)} \|\nabla E_\mathrm{I}({\f}^{(j)})\|_{M^{-1}}^2.
$$
In other words,
\begin{equation} \label{eq:3.15}
\sum_{j = 0}^{k} \cos^2 \theta_{M}^{(j)} \| \nabla E_\mathrm{I}({\f}^{(j)}) \|_{M^{-1}}^2 = \frac{1}{\widetilde C}\big( E_\mathrm{I}({\f}^{(0)}) - E_\mathrm{I}({\f}^{(k+1)}) \big).
\end{equation}
Since $E_\mathrm{I}({\f}) \geq 0$ is bounded below, the value $E_\mathrm{I}({\f}^{(0)}) - E_\mathrm{I}({\f}^{(k+1)})$ in \eqref{eq:3.15} is bounded above. Therefore,
$$
\lim_{k\to\infty} \sum_{j = 0}^{k} \cos^2 \theta_{M}^{(j)} \leq \sup_{\f} \frac{1}{\widetilde C}\big( E_\mathrm{I}({\f}^{(0)}) - E_\mathrm{I}({\f}) \big) < \infty,
$$
which is equivalent to \eqref{eq:Zou}.
\end{proof}

Lemma~\ref{lma:Zou} would guarantee the global convergence of the iteration if $\p^{(k)}$ is parallel to $\nabla E(\f^{(k)})$ in $M^{-1}$-norm, i.e. $\p^{(k)} = - M^{-1} \nabla E(\f^{(k)})$, since $\cos \theta_M^{(k)} = 1$. For the nonlinear CG method with preconditioners, the correction term makes $\p^{(k)}$ deviate from the vector $-M^{-1} \nabla E(\f^{(k)})$ while $\cos \theta_M^{(k)} > 0$ due to descent direction $\p^{(k)}$. To guarantee that $\|\nabla E(\f^{(k)})\|_{M^{-1}}$ vanishes as $k$ tends to infinite, in the following theorem, we apply the first inequality of \eqref{eq:descent_cond} in Lemma~\ref{lma:descent} to show that $\cos\theta_M^{(k)}$ would not vanish and establish the global convergence of the iteration of IEM.

\begin{theorem} \label{thm:2}
Suppose the step length $\alpha_k$ satisfies the strong Wolfe conditions \eqref{eq:Wolfe} with $0<c_1<c_2<\frac{1}{2}$. Then,
\begin{equation} \label{eq:PCG_Convergence}
\liminf_{k \to \infty}{\| \nabla E_\mathrm{I}({\f}^{(k)}) \|_{M^{-1}} } = 0,
\end{equation}
provided that $M$ is a symmetric positive definite matrix.
\end{theorem}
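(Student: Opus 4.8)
The plan is to argue by contradiction. Suppose \eqref{eq:PCG_Convergence} fails; then there exists $\gamma > 0$ such that $\|\nabla E_\mathrm{I}(\f^{(k)})\|_{M^{-1}} \geq \gamma$ for all sufficiently large $k$. I would combine this with the modified Zoutendijk condition (Lemma~\ref{lma:Zou}), which forces $\sum_k \cos^2\theta_M^{(k)} \|\nabla E_\mathrm{I}(\f^{(k)})\|_{M^{-1}}^2 < \infty$, to conclude that $\cos^2\theta_M^{(k)} \to 0$, i.e. $\cos\theta_M^{(k)} \to 0$. The strategy is then to show this is impossible, because $\cos\theta_M^{(k)}$ is bounded away from $0$; this is where Lemma~\ref{lma:descent} enters.

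The key computation is to produce a lower bound for $\cos\theta_M^{(k)}$ of the form $\cos\theta_M^{(k)} \geq \delta \,\|\nabla E_\mathrm{I}(\f^{(k)})\|_{M^{-1}} / \|\p^{(k)}\|_M$ using the first inequality of \eqref{eq:descent_cond}, namely $-\nabla E_\mathrm{I}(\f^{(k)})^\top \p^{(k)} \geq \frac{1}{1-c_2}\,(\textrm{something positive})$ — more precisely, rearranging \eqref{eq:descent_cond} gives $-\nabla E_\mathrm{I}(\f^{(k)})^\top \p^{(k)} \geq \tfrac{1-2c_2}{1-c_2}\,\|\nabla E_\mathrm{I}(\f^{(k)})\|_{M^{-1}}^2$ (using the \emph{second} inequality, which is the one with the right sign for a positive lower bound; note $1-2c_2 > 0$ since $c_2 < \tfrac12$). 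Plugging into the definition \eqref{eq:cos} of $\cos\theta_M^{(k)}$,
$$
\cos\theta_M^{(k)} = \frac{-\nabla E_\mathrm{I}(\f^{(k)})^\top \p^{(k)}}{\|\nabla E_\mathrm{I}(\f^{(k)})\|_{M^{-1}}\|\p^{(k)}\|_M} \geq \frac{1-2c_2}{1-c_2}\cdot\frac{\|\nabla E_\mathrm{I}(\f^{(k)})\|_{M^{-1}}}{\|\p^{(k)}\|_M},
$$
so $\cos^2\theta_M^{(k)}\|\nabla E_\mathrm{I}(\f^{(k)})\|_{M^{-1}}^2 \geq \left(\tfrac{1-2c_2}{1-c_2}\right)^2 \|\nabla E_\mathrm{I}(\f^{(k)})\|_{M^{-1}}^4 / \|\p^{(k)}\|_M^2$.

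To close the argument I need an upper bound on $\|\p^{(k)}\|_M$, and this is the step I expect to be the main obstacle. I would use the recursion \eqref{eq:PCGb}, $\p^{(k)} = -M^{-1}\g^{(k)} + \beta_k \p^{(k-1)}$, so $\|\p^{(k)}\|_M^2 = \|M^{-1}\g^{(k)}\|_M^2 - 2\beta_k\,(M^{-1}\g^{(k)})^\top M \p^{(k-1)} + \beta_k^2\|\p^{(k-1)}\|_M^2$. The cross term can be rewritten via $(M^{-1}\g^{(k)})^\top M\p^{(k-1)} = {\g^{(k)}}^\top\p^{(k-1)}$, and the second Wolfe condition \eqref{eq:Wolfe2} together with \eqref{eq:descent_cond} controls ${\g^{(k)}}^\top\p^{(k-1)}$ in terms of the $M^{-1}$-norms of the gradients. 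Using $\beta_k = \|\g^{(k)}\|_{M^{-1}}^2 / \|\g^{(k-1)}\|_{M^{-1}}^2$, one arrives at a recursive inequality of the form $\|\p^{(k)}\|_M^2 / \|\g^{(k)}\|_{M^{-1}}^4 \leq \|\p^{(k-1)}\|_M^2 / \|\g^{(k-1)}\|_{M^{-1}}^4 + c/\|\g^{(k)}\|_{M^{-1}}^2$ (mirroring the classical Fletcher--Reeves analysis in \cite[Chapter 5]{NoWr06}), which, under the contradiction hypothesis $\|\g^{(k)}\|_{M^{-1}}\geq\gamma$, yields $\|\p^{(k)}\|_M^2 \leq C' k\, \|\g^{(k)}\|_{M^{-1}}^4$ for large $k$. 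Substituting this into the lower bound above gives $\cos^2\theta_M^{(k)}\|\nabla E_\mathrm{I}(\f^{(k)})\|_{M^{-1}}^2 \geq \text{const}/k$, whose sum over $k$ diverges — contradicting Lemma~\ref{lma:Zou}. Hence the contradiction hypothesis is false and \eqref{eq:PCG_Convergence} holds. The delicate points will be handling the $M$- versus $M^{-1}$-norm bookkeeping cleanly (using $\|M^{-1}\g\|_M = \|\g\|_{M^{-1}}$) and verifying that the constants coming out of \eqref{eq:descent_cond} and \eqref{eq:Wolfe2} indeed have the signs needed for the recursion.
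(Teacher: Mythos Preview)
Your proposal is correct and follows essentially the same route as the paper's proof: contradiction hypothesis $\|\g^{(k)}\|_{M^{-1}}\geq\gamma$, the lower bound $\cos\theta_M^{(k)}\geq\tfrac{1-2c_2}{1-c_2}\,\|\g^{(k)}\|_{M^{-1}}/\|\p^{(k)}\|_M$ from Lemma~\ref{lma:descent}, the recursive estimate on $\|\p^{(k)}\|_M^2$ via \eqref{eq:PCGb}, \eqref{eq:Wolfe2}, and $\beta_k=\|\g^{(k)}\|_{M^{-1}}^2/\|\g^{(k-1)}\|_{M^{-1}}^2$, and finally divergence of a harmonic-type sum contradicting Lemma~\ref{lma:Zou}. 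Your normalization $\|\p^{(k)}\|_M^2/\|\g^{(k)}\|_{M^{-1}}^4$ makes the $\|\g^{(k)}\|_{M^{-1}}^4$ cancel cleanly in the final step, so you do not need the uniform upper bound $\|\g^{(k)}\|_{M^{-1}}\leq\bar\gamma$ that the paper invokes via Lemma~\ref{lma:Lipschitz}; otherwise the arguments coincide.
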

\begin{proof}
We prove \eqref{eq:PCG_Convergence} by contradiction.
Assume on the contrary that 
$$
\liminf_{k\to\infty}{\| \nabla {E}_A({\f}^{(k)}) \|_{M^{-1}}} \neq 0,
$$
i.e., there is a natural number $N$ and a positive real number $\gamma$ such that 
\begin{equation}\label{eq:1}
\| \nabla E_\mathrm{I}({\f}^{(k)}) \|_{M^{-1}} \geq \gamma,
\end{equation}
for every $k>N$. Let $\theta_M^{(k)}$ be defined as \eqref{eq:cos}. 
Then, by Lemma~\ref{lma:descent} and \eqref{eq:cos}, we have
$$
-\frac{1}{1-c_2} \leq \frac{\nabla E_\mathrm{I}(\f^{(k)})^{\top} \p^{(k)}}{\| \nabla E_\mathrm{I}(\f^{(k)}) \|^2_{M^{-1}}} \leq \frac{2c_2 - 1}{1 - c_2}, 
$$
i.e.,
$$
-\frac{1}{1-c_2} \leq - \cos \theta_M^{(k)} \frac{ \| \p^{(k)} \|_M }{\| \nabla E_\mathrm{I}(\f^{(k)}) \|_{M^{-1}}} \leq \frac{2c_2 - 1}{1 - c_2},
$$
which leads to
\begin{equation} \label{eq:2}
\frac{1-2c_2}{1-c_2} \frac{\|\nabla E_\mathrm{I}(\f^{(k)})\|_{M^{-1}}}{\|\p^{(k)}\|_M} \leq \cos \theta_M^{(k)} \leq  \frac{1}{1 - c_2} \frac{\|\nabla E_\mathrm{I}(\f^{(k)})\|_{M^{-1}}}{\|\p^{(k)}\|_M}. 
\end{equation}
By \eqref{eq:Zou} in Lemma~\ref{lma:Zou} and \eqref{eq:2}, we obtain
\begin{align}
\sum_{k=0}^{\infty}  \frac{\| \nabla E_\mathrm{I}({\f}^{(k)}) \|^4_{M^{-1}}}{\| \p^{(k)} \|^2_{M}} < \infty.  \label{eq:3}
\end{align}
Then, \eqref{eq:1} and \eqref{eq:3} lead to
\begin{equation}
\sum_{k=0}^{\infty}  \frac{1}{\| \p^{(k)} \|^2_{M}} < \infty. \label{eq:4}
\end{equation}
On the other hand, the second inequality of \eqref{eq:descent_cond} in Lemma~\ref{lma:descent} implies
\begin{equation} \label{eq:4.17a}
-\nabla E_\mathrm{I}({\f}^{(k)})^{\top} \p^{(k)} \leq \frac{1}{1-c_2} \| \nabla E_\mathrm{I}({\f}^{(k)}) \|^2_{M^{-1}}.
\end{equation}
Then, the second Wolfe condition \eqref{eq:Wolfe2} and \eqref{eq:4.17a} imply
\begin{equation}
| \nabla E_\mathrm{I}({\f}^{(k)})^{\top} \p^{(k-1)} | \leq -c_2 \nabla E_\mathrm{I}({\f}^{(k-1)})^{\top} \p^{(k-1)} \leq \frac{c_2}{1-c_2} \| \nabla E_\mathrm{I}({\f}^{(k-1)}) \|^2_{M^{-1}}.  \label{eq:5}
\end{equation}
The $M$-norm of $\p^{(k)} = -M^{-1} \nabla E_\mathrm{I}({\f}^{(k)}) + \beta_k \p^{(k-1)}$ can be estimated by
\begin{align}
\| \p^{(k)} \|^2_{M}
&=  \| M^{-1} \nabla E_\mathrm{I}({\f}^{(k)}) \|_{M}^2 - 2 \beta_k (M^{-1} \nabla E_\mathrm{I}({\f}^{(k)}) )^\top (M \p^{(k-1)}) + \beta^2_k \|\p^{(k-1)}\|_{M}^2 \nonumber\\
& \leq \| \nabla E_\mathrm{I}({\f}^{(k)}) \|^2_{M^{-1}} + 2\beta_k | \nabla E_\mathrm{I}({\f}^{(k)})^{\top} \p^{(k-1)} | + \beta^2_k \|\p^{(k-1)}\|_{M}^2, \label{eq:p_k_M2}
\end{align}
where $\beta_k$ in \eqref{eq:beta} is equivalent to
\begin{equation} \label{eq:beta_k}
\beta_k = \frac{\| \nabla E_\mathrm{I}({\f}^{(k)}) \|^2_{M^{-1}}}{\| \nabla E_\mathrm{I}({\f}^{(k-1)}) \|^2_{M^{-1}}}.
\end{equation}
Then, \eqref{eq:5}, \eqref{eq:p_k_M2} and \eqref{eq:beta_k} imply
\begin{align}
\| \p^{(k)} \|^2_{M}
&\leq \| \nabla E_\mathrm{I}({\f}^{(k)}) \|^2_{M^{-1}} + \frac{2c_2}{1-c_2} \beta_k \| \nabla E_\mathrm{I}({\f}^{(k-1)}) \|^2_{M^{-1}} + \beta^2_k \|\p^{(k-1)}\|_{M}^2 \nonumber\\
&= \| \nabla E_\mathrm{I}({\f}^{(k)}) \|^2_{M^{-1}} + \frac{2c_2}{1-c_2} \| \nabla E_\mathrm{I}({\f}^{(k)}) \|^2_{M^{-1}}  + \beta^2_k \|\p^{(k-1)}\|_{M}^2 \nonumber\\
&= \Big (\frac{1+c_2}{1-c_2} \Big ) \| \nabla E_\mathrm{I}({\f}^{(k)}) \|^2_{M^{-1}} + \beta^2_k \|\p^{(k-1)}\|_{M}^2. \label{eq:6}
\end{align}
Let $c_3 = (1+c_2) / (1-c_2)$. Then $c_3$ is positive. 
The recursive inequality \eqref{eq:6} with the initial vector $\p^{(0)} = M^{-1} \nabla E_\mathrm{I}({\f}^{(0)})$ implies
\begin{equation} \label{eq:p_k_a}
\| \p^{(k)} \|^2_{M} \leq c_3 \sum_{i=0}^k \Big(\prod_{j=i+1}^k \beta_j^2\Big) \|\nabla E_\mathrm{I}({\f}^{(i)})\|_{M^{-1}}^2.
\end{equation}
It can be derived from the expression of $\beta_k$ in \eqref{eq:beta_k} that
\begin{equation} \label{eq:7}
\prod_{j=i+1}^k \beta_j^2 = \frac{\| \nabla E_\mathrm{I}({\f}^{(k)}) \|^4_{M^{-1}}}{\| \nabla E_\mathrm{I}({\f}^{(i)}) \|^4_{M^{-1}}}.  
\end{equation}
By substituting the product of $\beta_k$ in \eqref{eq:p_k_a} with \eqref{eq:7}, we obtain
\begin{equation} \label{eq:4.31}
\| \p^{(k)} \|^2_{M} \leq c_3 \| \nabla E_\mathrm{I}({\f}^{(k)}) \|^4_{M^{-1}} \sum_{i=0}^k \frac{1}{\| \nabla E_\mathrm{I}({\f}^{(i)}) \|^2_{M^{-1}}}.
\end{equation}
In addition, since $\| \nabla E_\mathrm{I}({\f}^{(k)}) \|^2_{M^{-1}} \leq \| \nabla E_\mathrm{I}({\f}^{(j)}) \|^2_{M^{-1}}$ for $j \leq k$ and $k > N$, then, the assumption \eqref{eq:1} implies
\begin{equation}
\sum_{i=0}^k \frac{1}{\| \nabla E_\mathrm{I}({\f}^{(i)}) \|^2_{M^{-1}}} \leq \sum_{i=0}^k \frac{1}{\| \nabla E_\mathrm{I}({\f}^{(k)}) \|^2_{M^{-1}}}  \leq \frac{k}{\gamma^2}, ~\text{ for $k > N$}.
\label{eq:4.32}
\end{equation}
Note that Lemma~\ref{lma:Lipschitz} tells that $\nabla E_\mathrm{I}$ is Lipschitz continuous, and the domain of $\nabla E_\mathrm{I}$ is bounded, there exists $\overline{\gamma}$ such that $\| \nabla E_\mathrm{I}({\f}^{(k)}) \|_{M^{-1}} \leq \overline{\gamma}$ for all $k$. 
Consequently, from \eqref{eq:4.31} and \eqref{eq:4.32}, for $k>N$,
\begin{equation*}
\| \p^{(k)} \|^2_{M} \leq \frac{c_3 \overline{\gamma}^4}{\gamma^2} k,
\end{equation*}
which implies 
\begin{equation*}
\sum_{k=1}^{\infty} \frac{1}{\| \p^{(k)} \|^2_{M}} \geq \omega \sum_{k=1}^{\infty} \frac{1}{k}, 
\end{equation*}
for some $\omega>0$, which contradicts to \eqref{eq:4}.
\end{proof}

The global convergence of the proposed IEM (Algorithm \ref{alg:IEM}) is directly guaranteed by Theorem~\ref{thm:2}, which is summarized in the following corollary.

\begin{corollary}
Assume each step length satisfying the strong Wolfe conditions \eqref{eq:Wolfe} with $0<c_1<c_2<\frac{1}{2}$, the preconditioned nonlinear CG method, Algorithm \ref{alg:IEM} converges globally.
\end{corollary}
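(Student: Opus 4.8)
The plan is to obtain this corollary as an immediate consequence of Theorem~\ref{thm:2}, specialized to the concrete preconditioner $M$ used in Algorithm~\ref{alg:IEM}. First I would note that the standing hypothesis is consistent: by Corollary~\ref{cor:p} each CG direction $\p^{(k)}$ is a descent direction of $E_\mathrm{I}$ at $\f^{(k)}$, and by Lemma~\ref{lma:Wolfe} there then exists a step length $\alpha_k$ satisfying the strong Wolfe conditions \eqref{eq:Wolfe} with $0<c_1<c_2<\tfrac12$, so the iteration in Algorithm~\ref{alg:IEM} is well defined and generates an infinite sequence $\{\f^{(k)}\}$ (if $\nabla E_\mathrm{I}(\f^{(k)})=\0$ occurs after finitely many steps, a stationary point has already been reached and there is nothing to prove). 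It then remains only to check that the block-diagonal matrix $M$ appearing in \eqref{eq:PCGb} is symmetric positive definite, which reduces to verifying that the principal submatrices $[L_\mathrm{V}(f^{(0)})]_{\I,\I}$ and $[L_\mathrm{V}(f^{(0)})]_{\B,\B}$ of the weighted Laplacian \eqref{eq:VSLaplacian} are positive definite; this follows from the fact that $L_\mathrm{V}(f^{(0)})$ is a graph Laplacian with positive modified cotangent weights (hence positive semidefinite with kernel spanned by $\1$) together with the observation that, in a tetrahedral mesh, every boundary vertex is incident to at least one interior vertex, so the diagonal of $[L_\mathrm{V}(f^{(0)})]_{\B,\B}$ strictly dominates.

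With these checks in place, Theorem~\ref{thm:2} applies with this $M$ and yields $\liminf_{k\to\infty}\|\nabla E_\mathrm{I}(\f^{(k)})\|_{M^{-1}}=0$. Since $M$ is symmetric positive definite, the $M^{-1}$-norm and the Euclidean norm are equivalent, and therefore $\liminf_{k\to\infty}\|\nabla E_\mathrm{I}(\f^{(k)})\|=0$, which is precisely the meaning of global convergence for the preconditioned nonlinear CG scheme in the sense of \cite[Chapter~5]{NoWr06}. I would additionally remark that the first Wolfe condition \eqref{eq:Wolfe1} together with Corollary~\ref{cor:p} makes $\{E_\mathrm{I}(\f^{(k)})\}$ monotonically decreasing, while Theorem~\ref{cor:2.2} gives $E_\mathrm{I}\ge 0$; hence the energy values converge. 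Using the compactness established in the proof of Lemma~\ref{lma:Lipschitz}, one may extract a subsequence $\f^{(k_j)}\to\f^*$ along which $\nabla E_\mathrm{I}(\f^{(k_j)})\to\0$, and continuity of $\nabla E_\mathrm{I}$ then forces $\nabla E_\mathrm{I}(\f^*)=\0$, so that every such accumulation point is a critical point of the isovolumetric energy; if that critical value attains the lower bound $0$, Theorem~\ref{cor:2.2} identifies $\f^*$ as a volume-preserving parameterization.

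Because the corollary is essentially a repackaging of Theorem~\ref{thm:2} for the specific algorithmic choice of $M$, no genuine difficulty arises. The only step that deserves care is the verification that the preconditioner $M$ in Algorithm~\ref{alg:IEM} is symmetric positive definite, which in turn hinges on the positivity of the modified cotangent weights $\omega_\mathrm{V}(f^{(0)})$ at the initial map; the remaining steps are routine norm-equivalence and compactness arguments.
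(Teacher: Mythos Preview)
Your approach is essentially the paper's: the corollary is stated there as a direct consequence of Theorem~\ref{thm:2}, without a separate proof. Your additional remarks (norm equivalence, monotone energy, extraction of a critical accumulation point) are sound and go a bit beyond what the paper records.

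One caution, however, concerns your attempted verification that the preconditioner $M$ is symmetric positive definite. You argue via ``positive modified cotangent weights,'' but the weights $[\omega_\mathrm{V}(f)]_{i,j}^{k,\ell}$ involve $\cot(\theta_{i,j}^{k,\ell}(f))$, and dihedral angles in a tetrahedron can exceed $\pi/2$, so individual weights need not be positive; the diagonal-dominance argument you invoke for $[L_\mathrm{V}(f^{(0)})]_{\B,\B}$ then does not go through as written. Likewise, the claim that every boundary vertex is incident to an interior vertex fails on coarse meshes (e.g., a single tetrahedron). The paper itself does not supply a proof that the specific $M$ in Algorithm~\ref{alg:IEM} is positive definite; it simply treats this as a standing assumption in Theorem~\ref{thm:2} and relies implicitly on the Cholesky factorization succeeding in practice. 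So your proof is at least as complete as the paper's, but you should either drop the weight-positivity claim and state positive definiteness of $M$ as an assumption (matching the paper), or replace it with a correct argument.
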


\section{Numerical Experiments}
\label{sec:6}

In this section, we present the numerical results of the proposed preconditioned nonlinear CG method for IEM, Algorithm \ref{alg:IEM}. The benchmark tetrahedral mesh models demonstrated in Figure~\ref{fig:Models} are from Jacobson's GitHub~\cite{Jacobson}, Gu's website~\cite{GuOMT}, and the BraTS databases~\cite{BaGB21,BaAS17}.
Some tetrahedral mesh models are generated by using the \texttt{iso2mesh} \cite{FaBo09,TrYF20} and \texttt{JIGSAW} mesh generators \cite{Engw15,Engw14,Engw16,EnIv14,EnIv16}. 
All the experiments are performed in MATLAB on a laptop with AMD Ryzen 9 5900HS and 32 GB RAM.

\begin{figure}[]
\centering
\begin{tabular}{ccccc}
\toprule
Model name & Arnold & Heart & Igea & Brain\\
$\#\mathbb{V}(\mathcal{M})$ & 6,990 & 18,408 & 22,930 & 27,834 \\
$\#\mathbb{T}(\mathcal{M})$ & 36,875 & 103,751 & 130,375 & 158,583\\
&
\includegraphics[height=3.5cm]{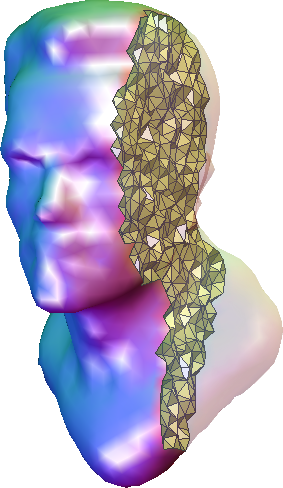} &
\includegraphics[height=3.5cm]{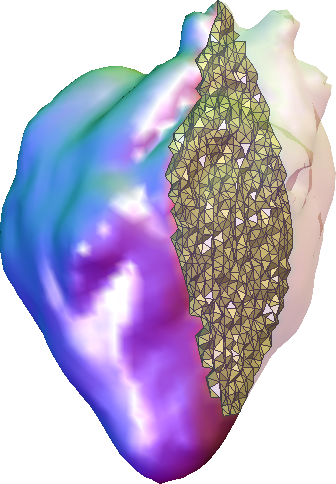} &
\includegraphics[height=3.5cm]{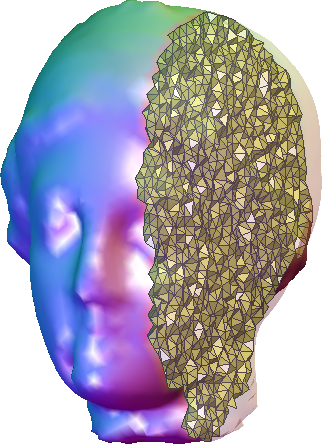} &
\includegraphics[height=3.5cm]{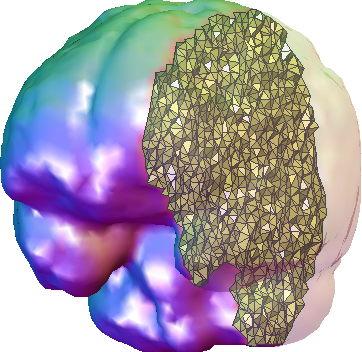} \\
\bottomrule \\
\toprule
Model name & Lion & David Head & Max Planck & Apple\\
$\#\mathbb{V}(\mathcal{M})$ & 36,727 & 40,669 & 66,935 & 102,906 \\
$\#\mathbb{T}(\mathcal{M})$ & 207,774 & 233,663 & 390,361 & 559,122\\
&\includegraphics[height=3.5cm]{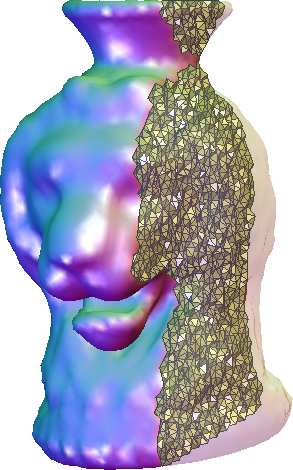} &
\includegraphics[height=3.5cm]{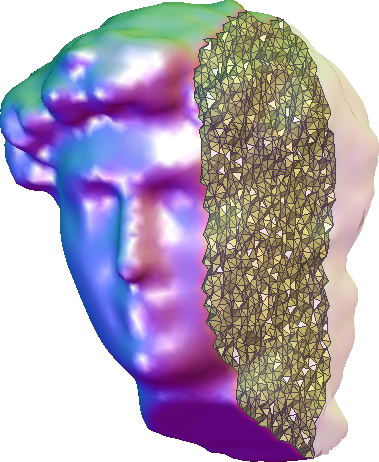} &
\includegraphics[height=3.5cm]{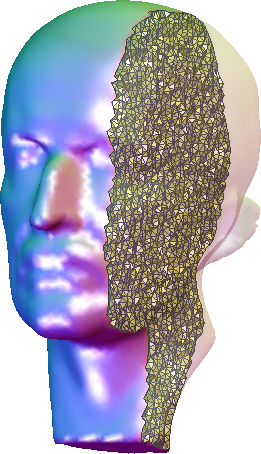} &
\includegraphics[height=3.5cm]{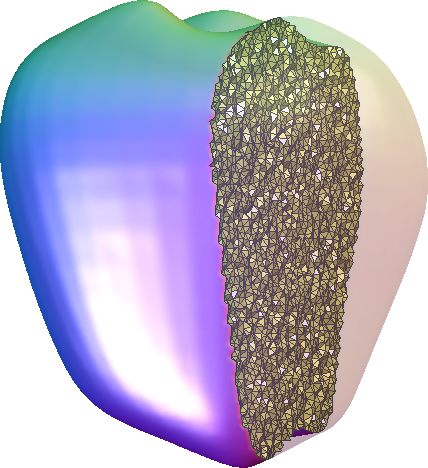} \\ 
\bottomrule
\end{tabular}
\caption{The benchmark tetrahedral mesh models.}
\label{fig:Models}
\end{figure}

To quantify the local volume distortion of a mapping $f$, we compute the relative difference between the volumes of $\tau$ and $f(\tau)$ as
\begin{equation} \label{eq:VolDist}
D_\mathrm{V}(f,\tau) = \left|\frac{|f(\tau)| / \mathcal{V}(f) - |\tau| / \mathcal{V}(e)}{|\tau| / \mathcal{V}(e)} \right|.
\end{equation}
A volume-preserving mapping $f$ has the property that $D_\mathrm{V}(f,\tau) = 0$, for every $\tau\in\mathbb{T}(\M)$. 

Figure \ref{fig:HistVolDist} demonstrates the histograms of the local volume distortions \eqref{eq:VolDist} of the mappings computed by the proposed IEM, Algorithm \ref{alg:IEM} with 500 iterations. It shows that most of the relative volume differences produced by the mappings are less than 0.1. 
Table \ref{tab:VolRatio} further shows the 25th, 50th, 75th, and 95th percentiles of $D_\mathrm{V}(f, \tau)$ \eqref{eq:VolDist}, respectively. 
In particular, the 95th percentile indicates that at least $95\%$ tetrahedra have local distortions of the magnitude $10^{-2}$ in volume ratios, which is quite satisfactory.

\begin{figure}[]
\centering
\begin{tabular}{cccc}
Arnold & Heart & Igea & Brain \\ 
\begin{overpic}[height=4cm]{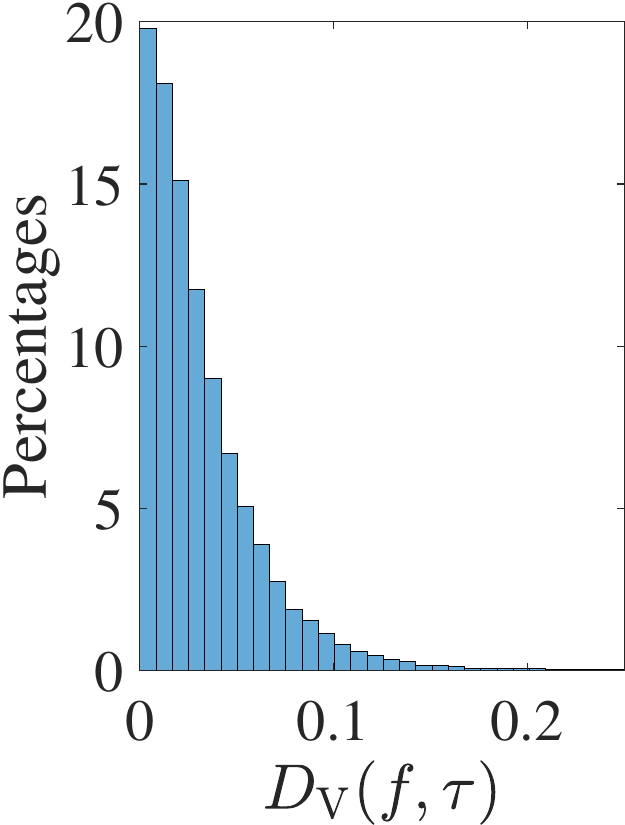}
\put(50,52){\includegraphics[height=1.5cm]{Arnold_v6990.png}}
\end{overpic} &
\begin{overpic}[height=4cm]{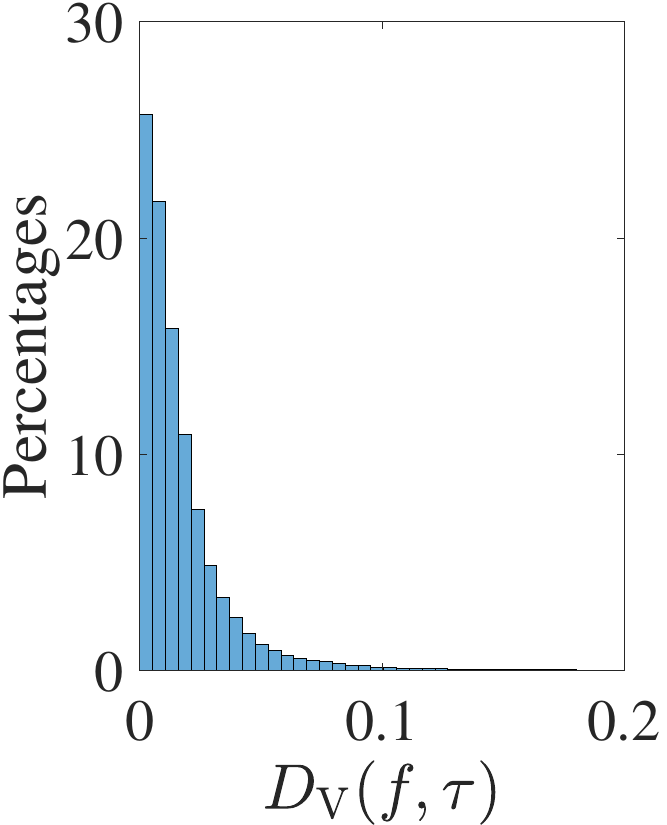}
\put(45,53){\includegraphics[height=1.5cm]{Heart_v18408.png}}
\end{overpic} &
\begin{overpic}[height=4cm]{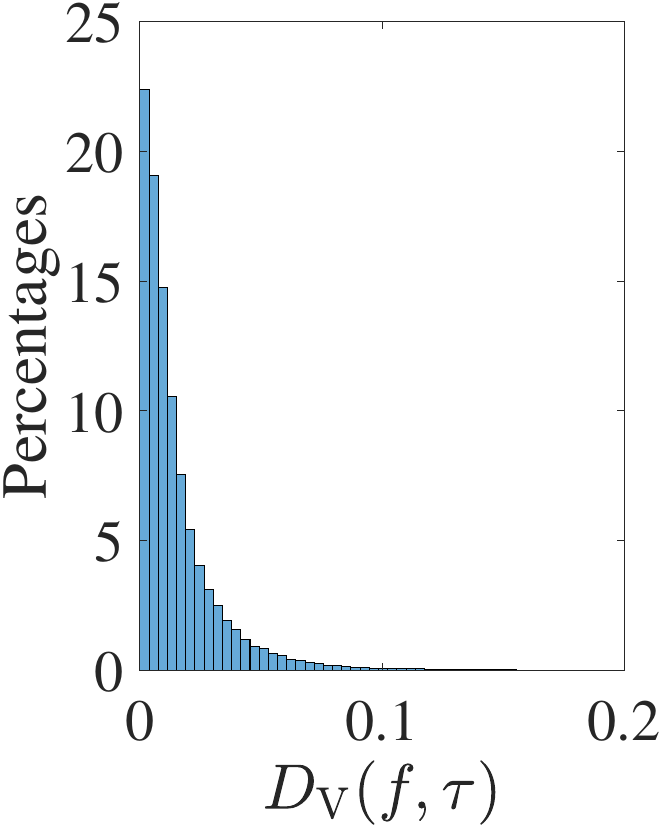}
\put(44,53){\includegraphics[height=1.5cm]{Igea_v22930.png}}
\end{overpic} &
\begin{overpic}[height=4cm]{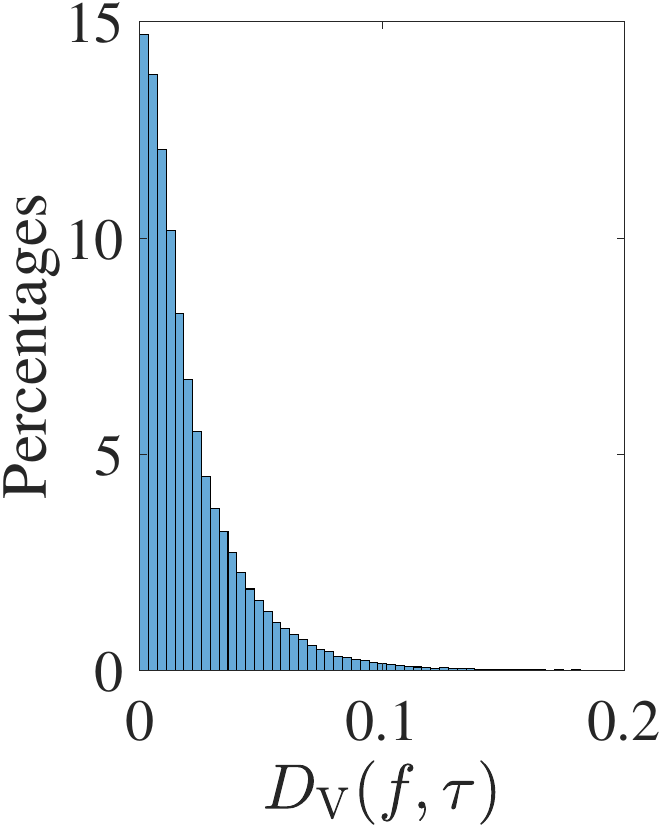} 
\put(40,60){\includegraphics[height=1.2cm]{Brain_v27834.png}}
\end{overpic} \\
Lion & David Head & Max Planck & Apple\\
\begin{overpic}[height=4cm]{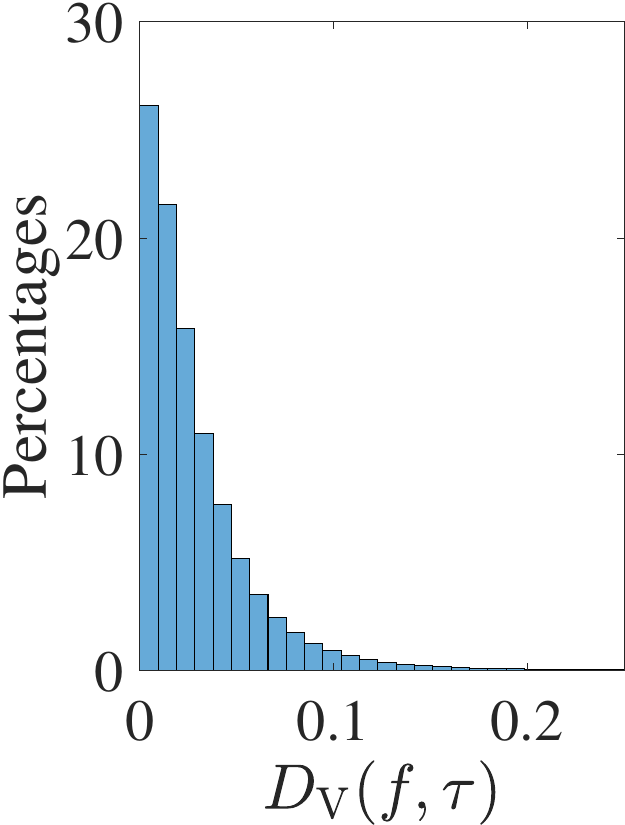}
\put(48,52){\includegraphics[height=1.5cm]{Lion_v36727.png}}
\end{overpic} &
\begin{overpic}[height=4cm]{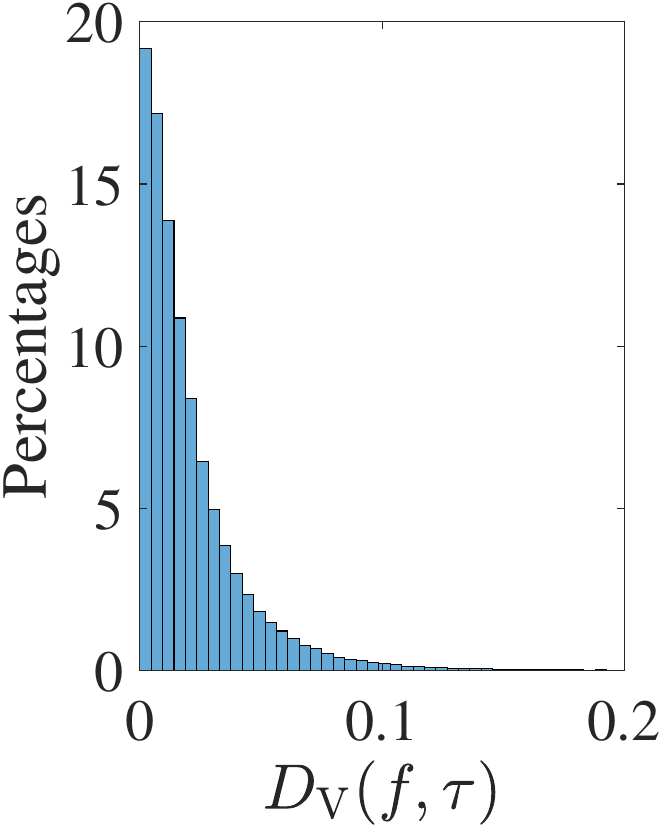}
\put(43,55){\includegraphics[height=1.4cm]{DavidHead_v40669.png}}
\end{overpic} &
\begin{overpic}[height=4cm]{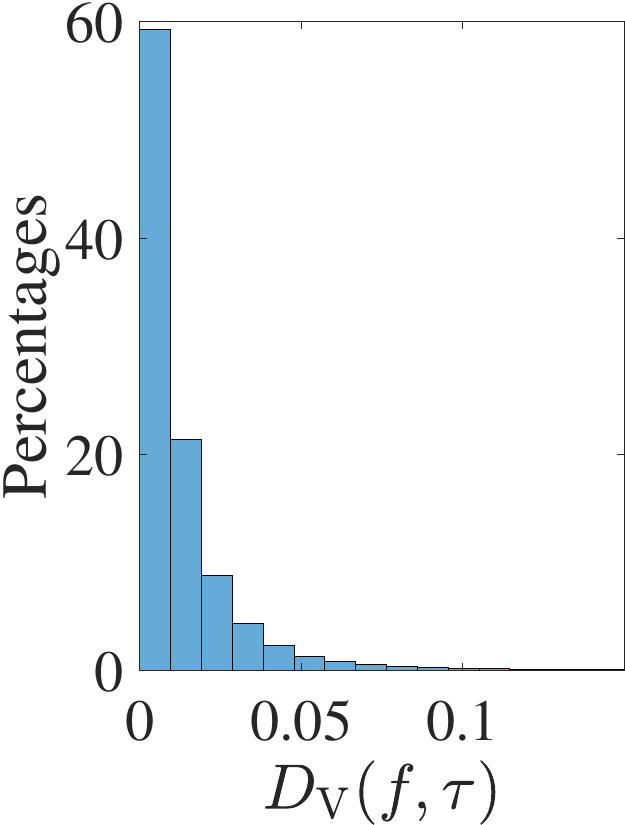}
\put(50,52){\includegraphics[height=1.5cm]{MaxPlanck_v66935.png}}
\end{overpic} &
\begin{overpic}[height=4cm]{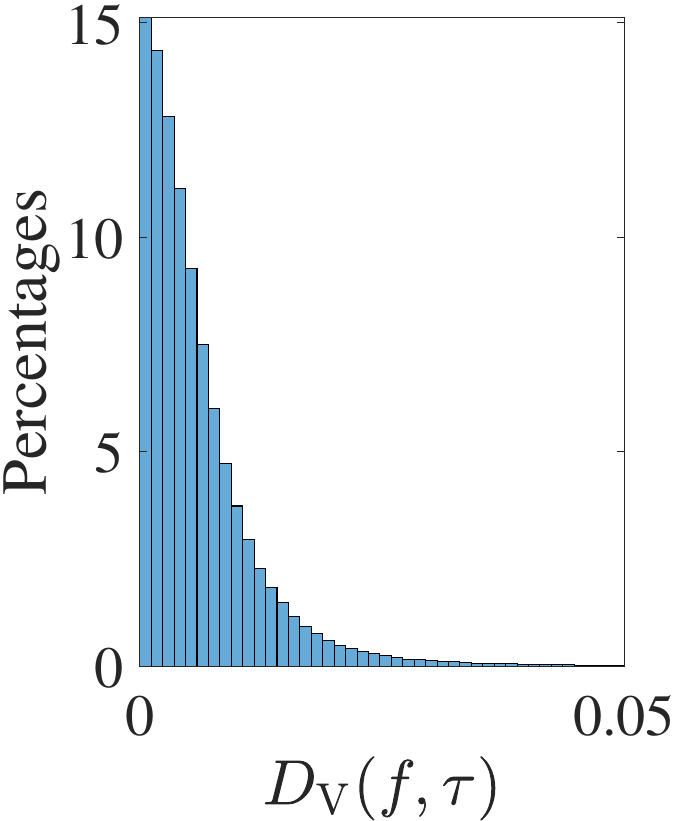}
\put(44,60){\includegraphics[height=1.2cm]{Apple_v102906.png}}
\end{overpic}
\end{tabular}
\caption{Histograms of the local volume distortion $D_\mathrm{V}(f,\tau)$ \eqref{eq:VolDist} of the mappings computed by the IEM, Algorithm \ref{alg:IEM}.}
\label{fig:HistVolDist}
\end{figure}

\begin{table}[]
\centering
\caption{The 25th, 50th, 75th, and 95th percentiles of the local volume distortion $D_\mathrm{V}(f,\tau)$ \eqref{eq:VolDist} of the mappings produced by the IEM, Algorithm \ref{alg:IEM}.}
\label{tab:VolRatio}
\begin{tabular}{lcccc}
\toprule 
\multirow{2}{*}{Model name} & \multicolumn{4}{c}{Percentiles of the local volume distortion $D_\mathrm{V}(f,\tau)$}  \\
& 25th & 50th & 75th & 95th \\
\midrule
Arnold     & $1.07\times 10^{-2}$ & $2.33\times 10^{-2}$ & $4.33\times 10^{-2}$ & $8.86\times 10^{-2}$\\
Heart      & $5.15\times 10^{-3}$ & $1.14\times 10^{-2}$ & $2.17\times 10^{-2}$ & $5.23\times 10^{-2}$\\
Igea       & $4.26\times 10^{-3}$ & $9.63\times 10^{-3}$ & $1.93\times 10^{-2}$ & $4.87\times 10^{-2}$\\
Brain      & $6.32\times 10^{-3}$ & $1.43\times 10^{-2}$ & $2.85\times 10^{-2}$ & $6.35\times 10^{-2}$\\
Lion       & $9.01\times 10^{-3}$ & $2.01\times 10^{-2}$ & $3.84\times 10^{-2}$ & $8.52\times 10^{-2}$\\
David Head & $6.21\times 10^{-3}$ & $1.40\times 10^{-2}$ & $2.74\times 10^{-2}$ & $6.32\times 10^{-2}$\\
Max Planck & $3.14\times 10^{-3}$ & $7.29\times 10^{-3}$ & $1.55\times 10^{-2}$ & $4.26\times 10^{-2}$\\
Apple      & $1.98\times 10^{-3}$ & $4.33\times 10^{-3}$ & $8.00\times 10^{-3}$ & $1.73\times 10^{-2}$\\
\bottomrule
\end{tabular}
\end{table}

\begin{figure}[]
\centering
\begin{tabular}{cccc}
Arnold & Heart & Igea & Brain\\
\includegraphics[height=5cm]{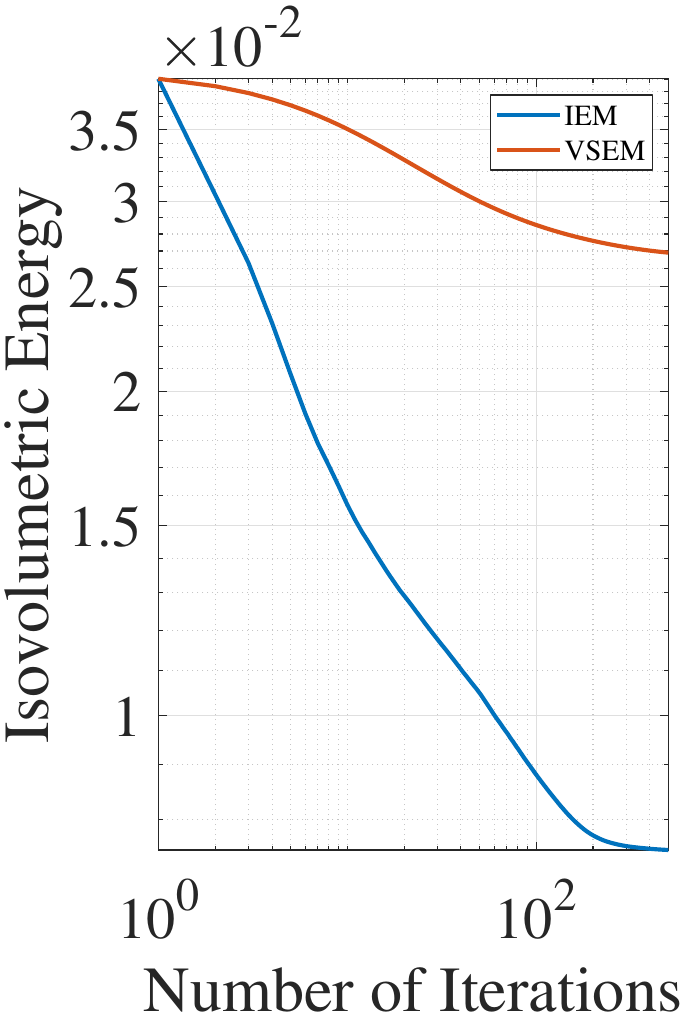} &
\includegraphics[height=5cm]{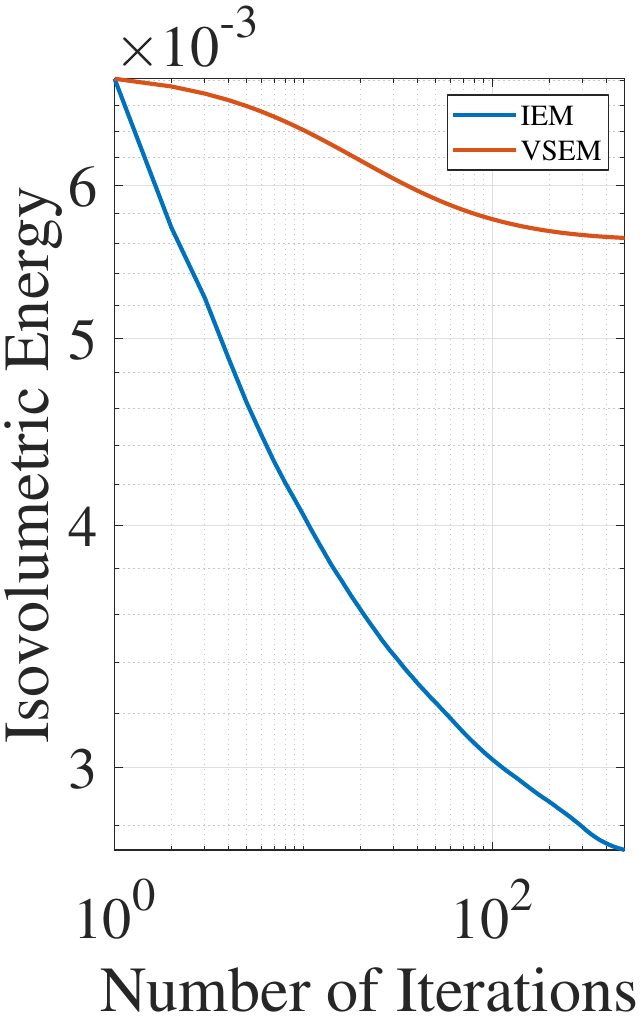} &
\includegraphics[height=5cm]{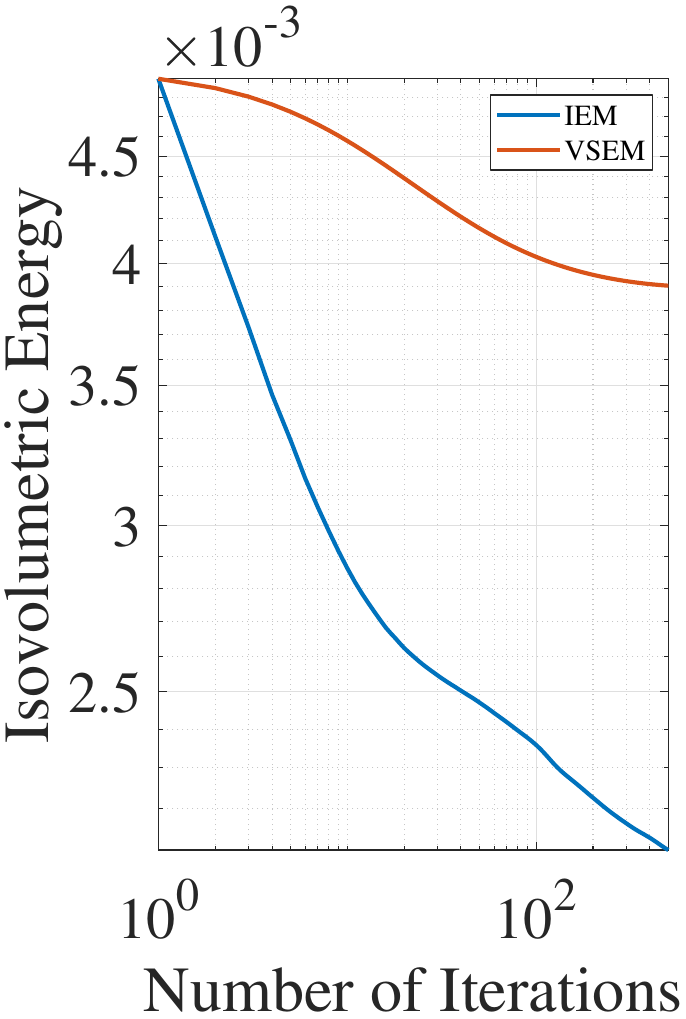} &
\includegraphics[height=5cm]{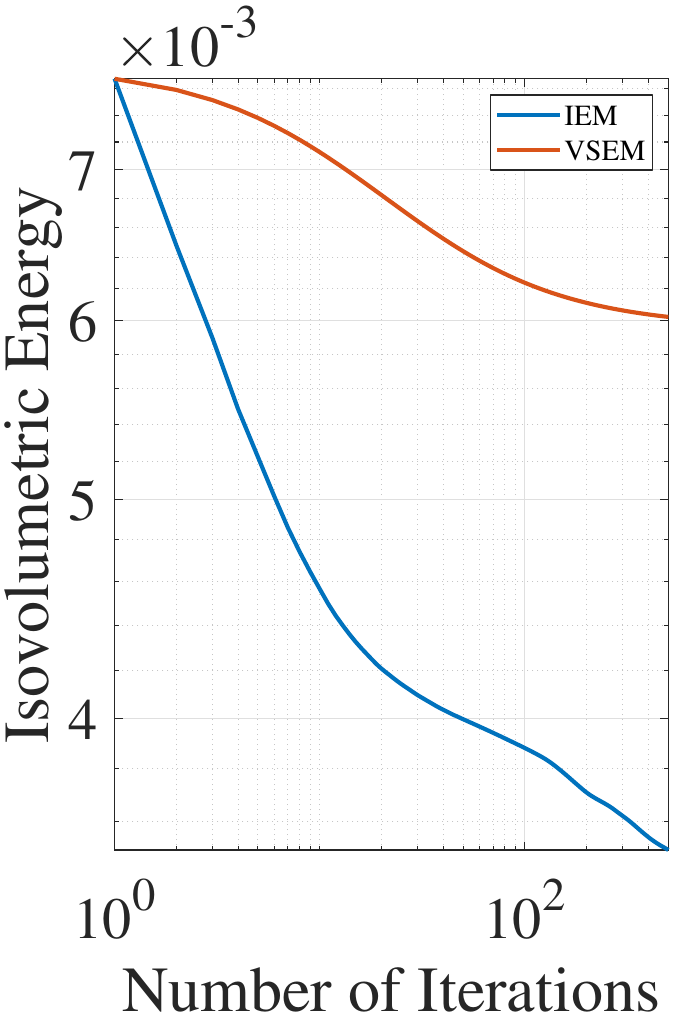} \\
Lion & David Head & Max Planck & Apple\\
\includegraphics[height=5cm]{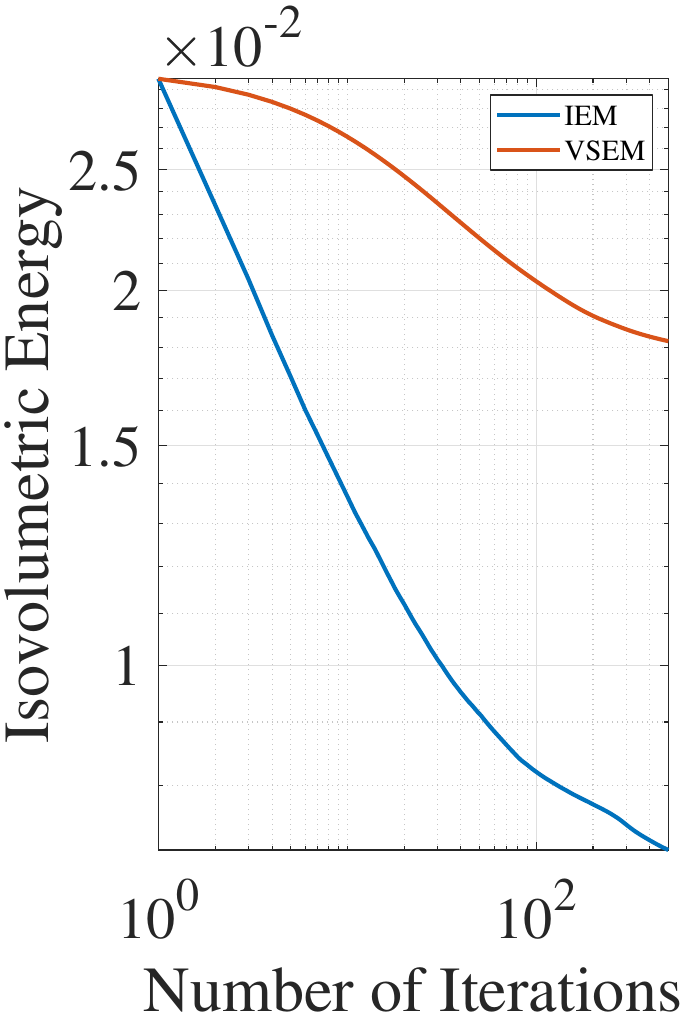} &
\includegraphics[height=5cm]{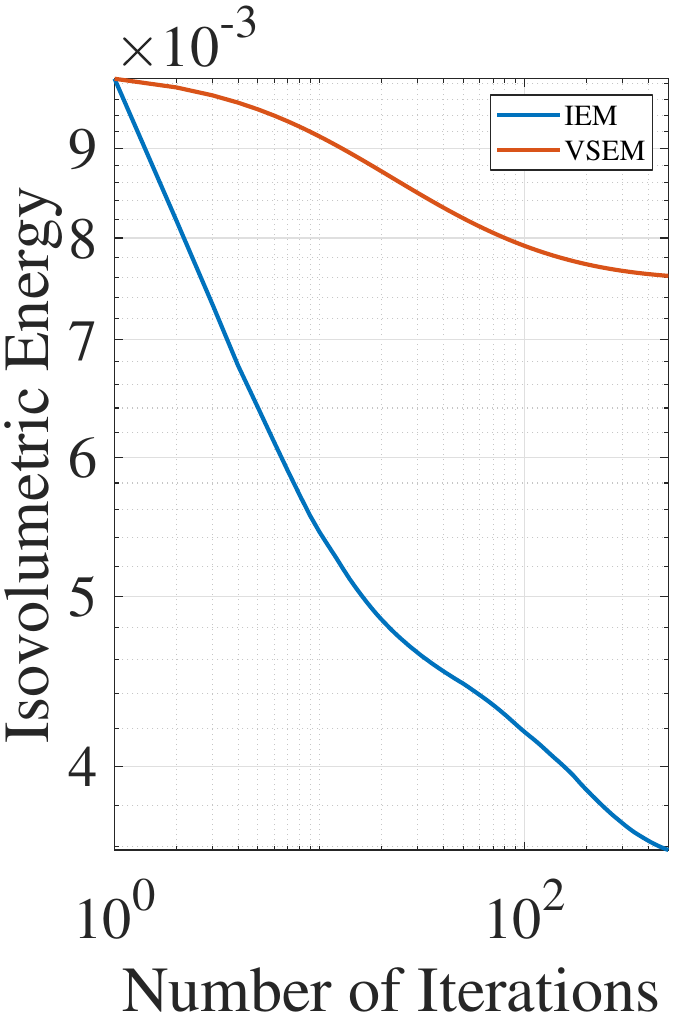} &
\includegraphics[height=5cm]{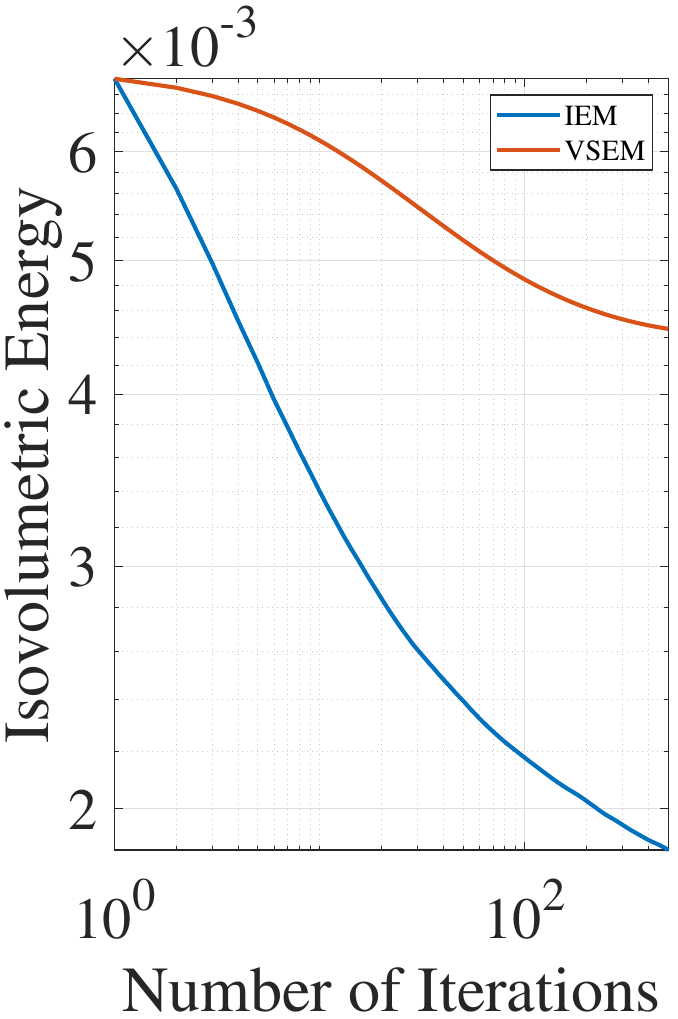} &
\includegraphics[height=5cm]{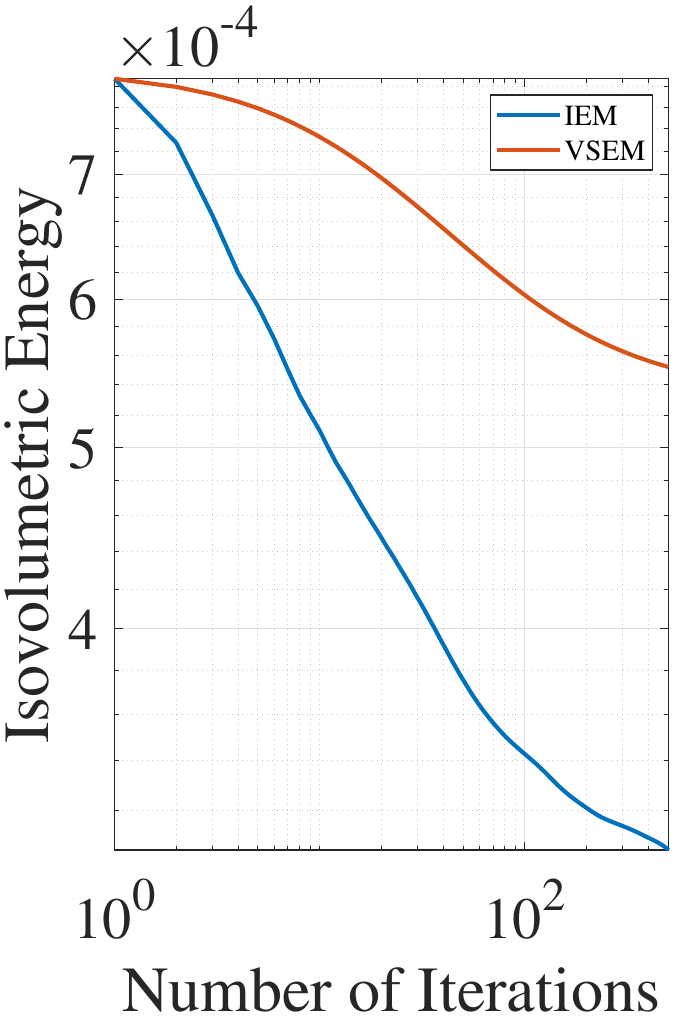} 
\end{tabular}
\caption{Relationship between the isovolumetric energy and the number of iterations of the IEM and VSEM algorithms, respectively.}
\label{fig:Energy}
\end{figure}

\subsection{Comparison to the state-of-the-art}

To the best of our knowledge, the fixed-point method of the VSEM \cite[Algorithm 4.4]{YuLL19} is the state-of-the-art method with the best effectiveness and accuracy for computing ball-shaped volume-preserving parameterizations of simplicial 3-manifolds. Comparison of the VSEM \cite[Algorithm 4.4]{YuLL19} to another state-of-the-art method, the OMT-based method \cite[Algorithm 4]{SuCL17}, can be found in \cite[Section 6]{YuLL19}, which indicates that the VSEM is significantly more effective and accurate than the OMT-based method \cite[Algorithm 4]{SuCL17} on computing ball-shaped volume-preserving parameterizations. 

In this subsection, we compare the proposed IEM, Algorithm \ref{alg:IEM}, to the fixed-point method of the VSEM \cite[Algorithm 4.4]{YuLL19}. The number of iterations for both algorithms is set to 500. The initial mapping is computed by the fixed-point method of VSEM \cite{YuLL19} with 15 iterations.

The relationship between the isovolumetric energy and the number of iterations is demonstrated in Figure \ref{fig:Energy}, which indicates that the proposed IEM is significantly more effective in decreasing the energy \eqref{eq:Ea}.

In addition, the mean and standard deviation (SD) of local volume distortion \eqref{eq:VolDist}, the isovolumetric energy $E_{I}$ \eqref{eq:Ea}, and the number of folding tetrahedra of resulting mappings are further demonstrated in Table \ref{tab:Preprocess} and visualized in Figures \ref{fig:Energy & Time} and \ref{fig:Vol_dist}. We observe that our proposed IEM has significant improvement in terms of accuracy and bijectivity. On average, our proposed IEM improves $53\%$, $26\%$, and $38\%$, in terms of energy, mean, and SD of local volume distortion, respectively, relative to that of the fixed-point method of the VSEM \cite[Algorithm 4.4]{YuLL19}.

Furthermore, we compare the computational time costs of the proposed IEM and the VSEM \cite{YuLL19} in Figure \ref{fig:Energy & Time}. The results are consistent with the computational complexity of $O(n^3)$ for both algorithms. This indicates that the efficiency of the proposed IEM is competitive with that of the VSEM \cite{YuLL19}. 

On the other hand, to present the utility of the AST \eqref{eq:AST} on improving the bijectivity, in Table \ref{tab:Nonpreprocess}, we further demonstrate the numerical results in the above metrics without performing the AST \eqref{eq:AST} in Steps \ref{alg:IEM_1}--\ref{alg:IEM_3} of Algorithm \ref{alg:IEM}. Comparing the results in Table \ref{tab:Nonpreprocess} to those with the AST in Table \ref{tab:Preprocess}, we observe that the volume-preserving qualities are similar in both tables. However, the bijectivity of mappings produced by the proposed IEM is significantly improved and achieves completely bijective mappings. These results demonstrate the great utility of the AST \eqref{eq:AST} in enhancing the bijectivity in the proposed IEM.

The comparison between the number of folding tetrahedra with and without performing the AST for five particular models is further visualized in Figure \ref{fig:Folding Comparison}, which indicates that the AST \eqref{eq:AST} shows little utility in improving the bijectivity of mappings produced by the fixed-point method \cite{YuLL19} of VSEM. These results indicate that both the boundary updating and the AST \eqref{eq:AST} are crucial in achieving bijective volume-preserving ball-shaped mappings. To the best of our knowledge, there is no algorithm for tetrahedral mappings that could theoretically guarantee the bijectivity of the produced mapping. Still, numerical results of the proposed IEM indicate that the volume-preserving property might potentially provide a way to achieve orientation-preserving tetrahedral mappings. 

\begin{table}[]
\centering
\caption{
The mean and SD of local volume distortion \eqref{eq:VolDist} and isovolumetric energy \eqref{eq:Ea} of mappings produced by our proposed IEM (Algorithm \ref{alg:IEM}) and the fixed-point method \cite[Algorithm 4.4]{YuLL19}, with 500 iterations.}
\label{tab:Preprocess}
\resizebox{\textwidth}{!}{
\begin{tabular}{lcccccccc}
\toprule
\multirow{3}{*}{Model name} & \multicolumn{4}{c}{Proposed IEM (Algorithm \ref{alg:IEM})}  &\multicolumn{4}{c}{Fixed-point method of VSEM \cite[Algorithm 4.4]{YuLL19}} \\
\cmidrule{2-9}
& \multicolumn{2}{c}{Volume distortion $D_\mathrm{V}(f,\tau)$} & \multirow{2}{*}{$E_{I}(f)$} & \#Fold- & \multicolumn{2}{c}{Volume distortion $D_\mathrm{V}(f,\tau)$} & \multirow{2}{*}{$E_{I}(f)$} & \#Fold- \\ 
 & Mean & SD & & ings & Mean & SD & & ings
\\
\midrule 
Arnold           & $3.17\times 10^{-2}$ & $3.07\times 10^{-2}$ & $7.50\times 10^{-3}$ & 0 & $5.82\times 10^{-2}$ & $6.60\times 10^{-2}$ & $2.69\times 10^{-2}$ & 2 \\ 
Heart            & $1.73\times 10^{-2}$ & $2.07\times 10^{-2}$ & $2.78\times 10^{-3}$ & 0 & $2.30\times 10^{-2}$ & $3.40\times 10^{-2}$ & $5.87\times 10^{-3}$ & 1 \\ 
Igea             & $1.52\times 10^{-2}$ & $1.77\times 10^{-2}$ & $2.10\times 10^{-3}$ & 0 & $1.86\times 10^{-2}$ & $2.65\times 10^{-2}$ & $3.90\times 10^{-3}$ & 0 \\ 
Brain            & $2.11\times 10^{-2}$ & $2.18\times 10^{-2}$ & $3.50\times 10^{-3}$ & 0 & $2.66\times 10^{-2}$ & $3.11\times 10^{-2}$ & $6.02\times 10^{-3}$ & 0 \\ 
Lion             & $2.89\times 10^{-2}$ & $3.13\times 10^{-2}$ & $7.10\times 10^{-2}$ & 0 & $4.27\times 10^{-2}$ & $5.39\times 10^{-2}$ & $1.82\times 10^{-2}$ & 5 \\ 
David Head       & $2.05\times 10^{-2}$ & $2.22\times 10^{-2}$ & $3.52\times 10^{-3}$ & 0 & $2.55\times 10^{-2}$ & $3.38\times 10^{-2}$ & $6.81\times 10^{-3}$ & 0 \\ 
Max Planck       & $1.28\times 10^{-2}$ & $1.78\times 10^{-2}$ & $1.87\times 10^{-3}$ & 0 & $1.78\times 10^{-2}$ & $3.02\times 10^{-2}$ & $4.46\times 10^{-3}$ & 6 \\ 
Apple            & $6.00\times 10^{-3}$ & $6.20\times 10^{-3}$ & $3.06\times 10^{-4}$ & 0 & $7.60\times 10^{-3}$ & $9.00\times 10^{-3}$ & $5.48\times 10^{-4}$ & 0 \\ 
\bottomrule 
\end{tabular}
}
\end{table}

\begin{table}[]
\centering
\caption{The mean and SD of local volume distortion \eqref{eq:VolDist} and isovolumetric energy \eqref{eq:Ea} of mappings produced by our proposed IEM (Algorithm \ref{alg:IEM}) and the fixed-point method \cite[Algorithm 4.4]{YuLL19}, with 500 iterations, without performing the AST \eqref{eq:AST}.}
\label{tab:Nonpreprocess}
\resizebox{\textwidth}{!}{
\begin{tabular}{lcccccccc}
\toprule
\multirow{3}{*}{Model name} & \multicolumn{4}{c}{Proposed IEM (Algorithm \ref{alg:IEM})}  &\multicolumn{4}{c}{Fixed-point method of VSEM \cite[Algorithm 4.4]{YuLL19}} \\
\cmidrule{2-9}
& \multicolumn{2}{c}{Volume distortion $D_\mathrm{V}(f,\tau)$} & \multirow{2}{*}{$E_{I}(f)$} & \#Fold- & \multicolumn{2}{c}{Volume distortion $D_\mathrm{V}(f,\tau)$} & \multirow{2}{*}{$E_{I}(f)$} & \#Fold- \\ 
 & Mean & SD & & ings & Mean & SD & & ings
\\
\midrule 
Arnold           & $3.16\times 10^{-2}$ & $3.07\times 10^{-2}$ & $7.49\times 10^{-3}$ & 0 & $5.14\times 10^{-2}$ & $6.10\times 10^{-2}$ & $2.13\times 10^{-2}$ & 2 \\ 
Heart            & $1.72\times 10^{-2}$ & $2.04\times 10^{-2}$ & $2.72\times 10^{-3}$ & 0 & $2.10\times 10^{-2}$ & $3.31\times 10^{-2}$ & $5.16\times 10^{-3}$ & 1 \\ 
Igea             & $1.52\times 10^{-2}$ & $1.80\times 10^{-2}$ & $2.15\times 10^{-3}$ & 0 & $1.86\times 10^{-2}$ & $2.64\times 10^{-2}$ & $3.86\times 10^{-3}$ & 0 \\ 
Brain            & $2.07\times 10^{-2}$ & $2.15\times 10^{-2}$ & $3.40\times 10^{-3}$ & 0 & $2.56\times 10^{-2}$ & $3.93\times 10^{-2}$ & $5.43\times 10^{-3}$ & 0 \\ 
Lion             & $2.85\times 10^{-2}$ & $3.05\times 10^{-2}$ & $6.79\times 10^{-2}$ & 2 & $3.89\times 10^{-2}$ & $4.97\times 10^{-2}$ & $1.53\times 10^{-2}$ & 5 \\ 
David Head       & $2.05\times 10^{-2}$ & $2.21\times 10^{-2}$ & $3.51\times 10^{-3}$ & 0 & $2.51\times 10^{-2}$ & $3.31\times 10^{-2}$ & $6.55\times 10^{-3}$ & 0 \\ 
Max Planck       & $1.30\times 10^{-2}$ & $1.72\times 10^{-2}$ & $1.82\times 10^{-3}$ & 0 & $1.58\times 10^{-2}$ & $2.83\times 10^{-2}$ & $3.75\times 10^{-3}$ & 5 \\ 
Apple            & $6.00\times 10^{-3}$ & $6.00\times 10^{-3}$ & $2.98\times 10^{-4}$ & 0 & $7.60\times 10^{-3}$ & $9.80\times 10^{-3}$ & $6.06\times 10^{-4}$ & 0 \\ 
\bottomrule
\end{tabular}
}
\end{table}

\begin{figure}[]
\centering
\resizebox{\textwidth}{!}{
\begin{tabular}{cc}
\includegraphics[height=4cm]{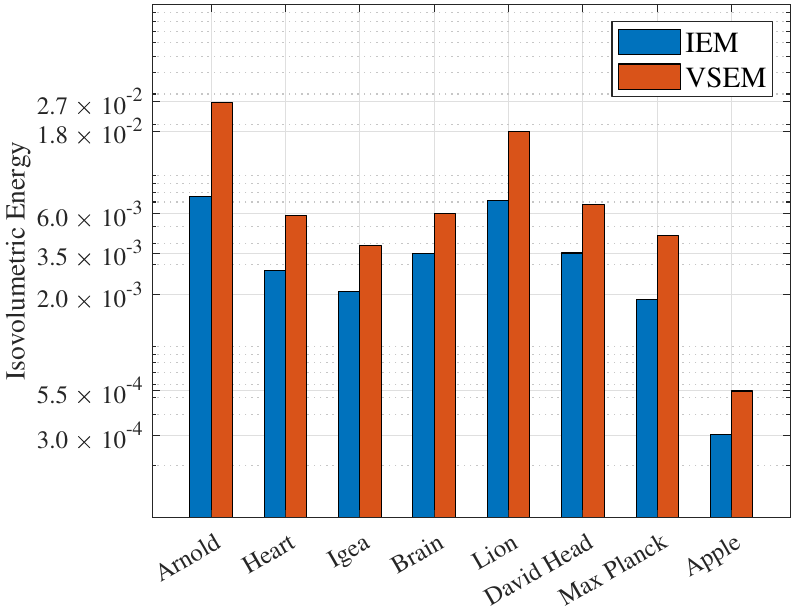} &
\includegraphics[height=4cm]{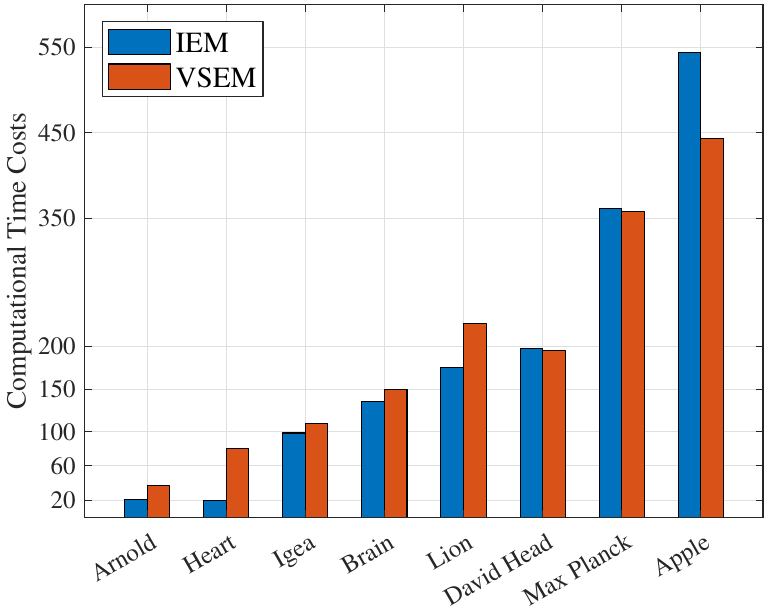} \\
(a) & (b)
\end{tabular}
}
\caption{The isovolumetric energy (a) in \eqref{eq:Ea} and computational time costs (b), carried out by the proposed IEM (Algorithm \ref{alg:IEM}) and the fixed-point method of VSEM \cite[Algorithm 4.4]{YuLW19} among all benchmark models.}
\label{fig:Energy & Time}
\end{figure}

\begin{figure}[]
\centering
\resizebox{\textwidth}{!}{
\begin{tabular}{cc}
\includegraphics[height=4cm]{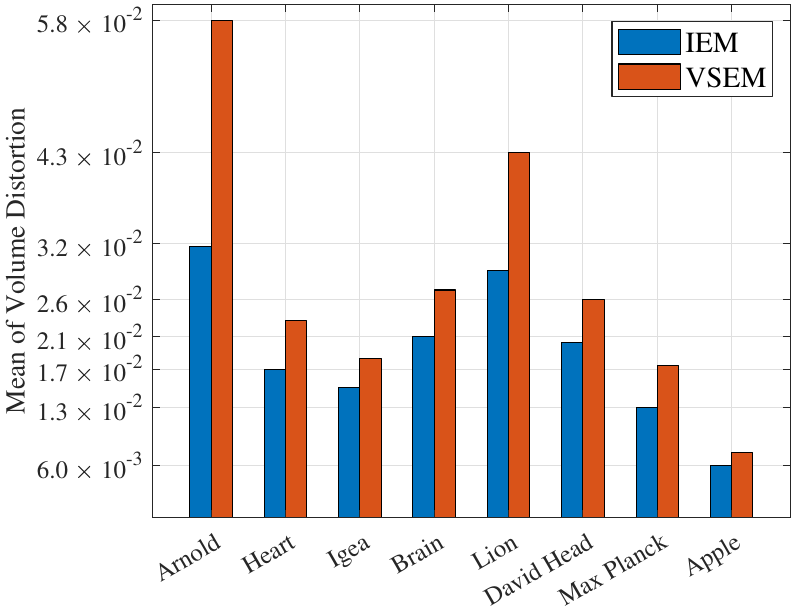} &
\includegraphics[height=4cm]{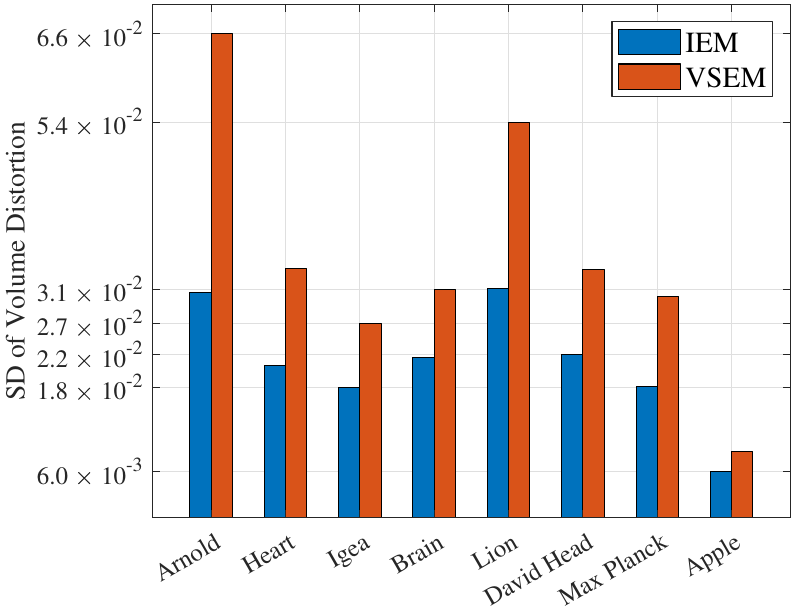} \\
(a) & (b)
\end{tabular}
}
\caption{The mean (a) and SD (b) of volume distortion in \eqref{eq:VolDist}, carried out by the proposed IEM, Algorithm \ref{alg:IEM}, and VSEM \cite{YuLW19} among all benchmark models.}
\label{fig:Vol_dist}
\end{figure}

\begin{figure}[]
\centering
\begin{overpic}[width=0.8\textwidth]{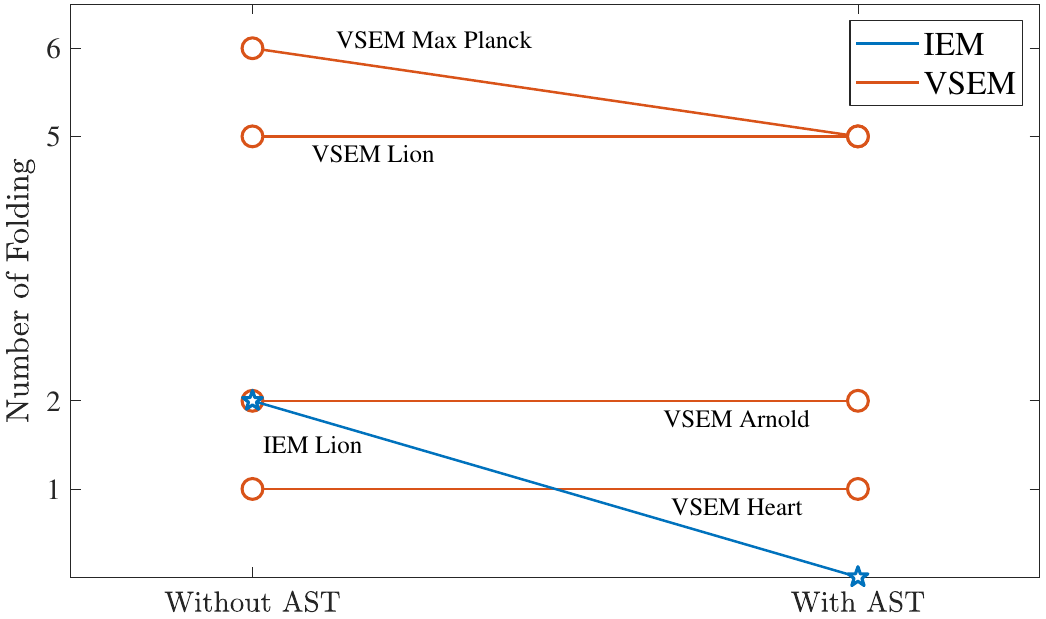}
\put(13,15){\includegraphics[height=1.4cm]{Lion_v36727.png}}
\put(22,33){\includegraphics[height=1.4cm]{Lion_v36727.png}}
\put(85,7){\includegraphics[height=1.4cm]{Heart_v18408.png}}
\put(72,22) {\includegraphics[height=1.5cm]{Arnold_v6990.png}}
\put(13,45) {\includegraphics[height=1.4cm]{MaxPlanck_v66935.png}}
\end{overpic}
\caption{An illustration of comparison in terms of numbers of folding tetrahedra of mappings produced by the proposed IEM (Algorithm \ref{alg:IEM}) and the fixed-point method of VSEM \cite[Algorithm 4.4]{YuLL19} with and without performing the AST \eqref{eq:AST}.}
\label{fig:Folding Comparison}
\end{figure}

\section{Applications to shape registration and deformation}
\label{sec:7}

The ball-shaped parameterization computed by the IEM is unique up to a rotation transformation. Based on this fact, each rotation transformation in the parameter space defines a volume-preserving registration mapping between manifolds. A natural one-to-one correspondence between two manifolds can then be obtained through an optimal rotation transformation in the parameter space. This mapping also provides a measurement of shape dissimilarity between the manifolds.

In practice, to find the volume-preserving registration mapping between two tetrahedral meshes $\mathcal{M}_0$ and $\mathcal{M}_1$, we first compute the volume-preserving parameterizations $f_s: \mathcal{M}_s\to\mathbb{B}^3$, $s=0,1$, using Algorithm \ref{alg:IEM}. 
The remaining degrees of freedom on the solid unit ball are rotation transformations in $\mathrm{SO}(3)$. 
To remove such a freedom, we compute the optimal rotations by
$$
g_s = \argmin_{g \in \mathrm{SO}(3)} \sum_{v\in\mathbb{V}(\mathcal{M}_s)} \| g \circ f_s(v) - v  \|_2^2 \,\mathrm{vol}(v), ~ s=0,1, 
$$
where $\mathrm{vol}(v)$ denotes the vertex volume of $v$ defined by $\mathrm{vol}(v) = \frac{1}{4} \sum_{\tau\ni v} |\tau|$. 
Then, the rigid registration mapping on $\mathbb{B}^3$ is given by $h = g_1^{-1} \circ g_0$. 
Ultimately, the volume-preserving registration mapping $\varphi:\mathcal{M}_0\to\mathcal{M}_1$ is given by $\varphi = f_1^{-1}\circ h\circ f_0$, as illustrated in Figure \ref{fig:BrainReg}. 

\begin{figure}[]
\center
\begin{tabular}{ccc}
$\mathcal{M}_0$ \includegraphics[height=4cm]{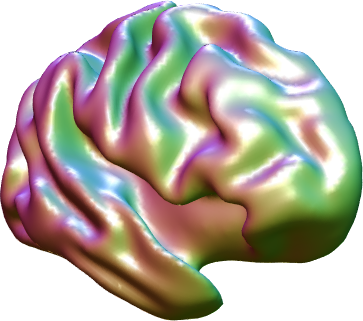} &
\raisebox{1.2cm}{$\xrightarrow{~\varphi \,=\, f_1^{-1}\circ h\circ f_0~}$} & 
\includegraphics[height=4cm]{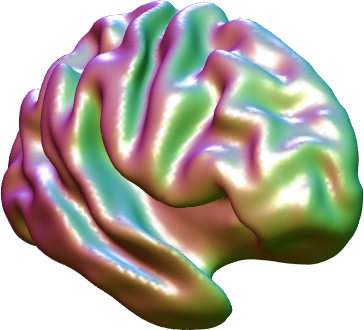} $\mathcal{M}_1$ \\
$f_0\Bigg\downarrow$ & & $f_1\Bigg\downarrow$ \\[0.5cm]
\includegraphics[height=4cm]{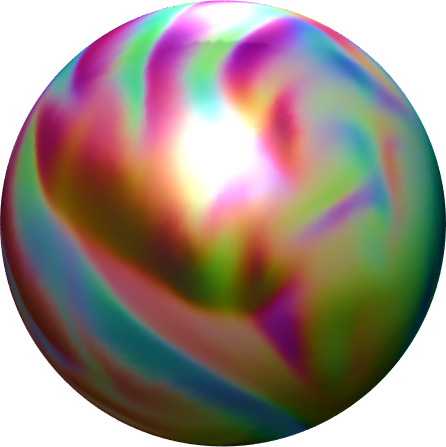} &
\raisebox{1.2cm}{$\xrightarrow{~~~h \,=\, g_1^{-1} \circ g_0~~~}$} & 
\includegraphics[height=4cm]{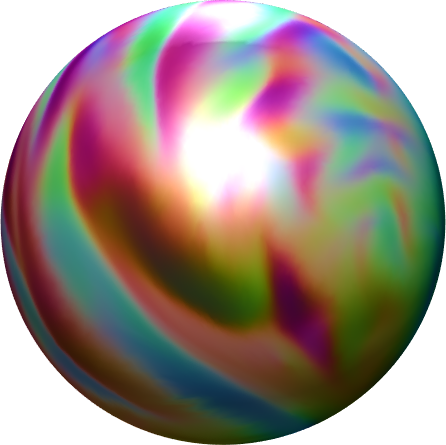}
\end{tabular}
\caption{An illustration of the computational procedures of a volume-preserving registration mapping between two simplicial $3$-manifolds.}
\label{fig:BrainReg}
\end{figure}

To visualize the deformation of the mapping $\varphi$, one can employ the linear homotopy, defined as
\begin{equation} \label{eq:homotopy}
\mathcal{H}(v,t) = (1-t) v + t \varphi(v).
\end{equation}
In Figure \ref{fig:BrainHomotopy}, we demonstrate the simplicial $3$-manifolds $\mathcal{H}(\mathcal{M}_0,t)$, for $t=0, \frac{1}{3}, \frac{2}{3}$, and $1$, which shows that the deformation is very subtle because the shapes of two given simplicial $3$-manifolds are similar. 
To quantify the deformation resulting from $\varphi$, we compute the value
$$
d(\varphi) = \frac{\sum_{v\in\mathbb{V}(\mathcal{M}_0)} \| v - \varphi(v) \|_2 \mathrm{vol}(v)}{|\mathcal{M}_0|} = 7.82 \%,
$$
which is reasonably small.

\begin{figure}[]
\centering
\begin{tabular}{cccc}
\includegraphics[height=3cm]{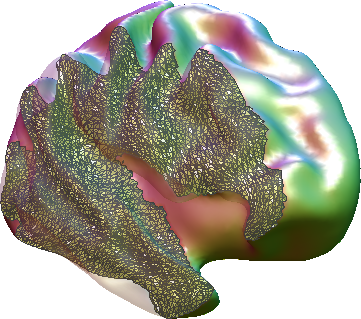} &
\includegraphics[height=3cm]{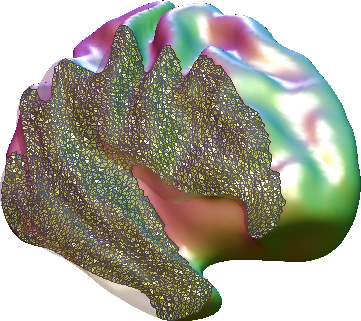} &
\includegraphics[height=3cm]{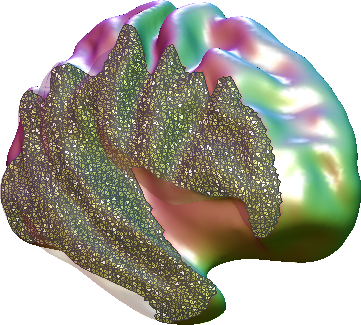} &
\includegraphics[height=3cm]{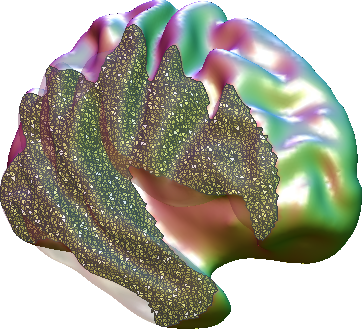} \\
$\mathcal{H}(\mathcal{M}_0,0)$ & 
$\mathcal{H}(\mathcal{M}_0,\frac{1}{3})$ &
$\mathcal{H}(\mathcal{M}_0,\frac{2}{3})$ &
$\mathcal{H}(\mathcal{M}_0,1)$
\end{tabular}
\caption{The image of linear homotopy $\mathcal{H}(\mathcal{M}_0,t)$ between the identity mapping and the volume-preserving registration mapping, as defined in \eqref{eq:homotopy}.}
\label{fig:BrainHomotopy}
\end{figure}

\section{Concluding remarks} 
\label{sec:8}

In this paper, we have formulated the computation of ball-shaped volume-preserving parameterizations as a nonlinear functional minimization problem and developed an efficient preconditioned nonlinear CG method for solving the problem. Convergence analysis is provided to guarantee the global convergence of the proposed algorithm. Numerical experiments demonstrate that our method achieves superior accuracy and computational efficiency compared to a previous representative state-of-the-art algorithm. Additionally, our algorithm shows considerable potential for measuring shape dissimilarity, an essential issue in shape analysis. This highlights the practical utility and effectiveness of our developed IEM algorithm.

\section*{Acknowledgement}
The work of the authors was supported by the National Science and Technology Council and the National Center for Theoretical Sciences in Taiwan.

\end{sloppypar}
\end{document}